\newtheorem{theorem}{Theorem}
\newtheorem{corollary}[theorem]{Corollary}
\newtheorem{lemma}[theorem]{Lemma}
\newtheorem{proposition}[theorem]{Proposition}
\newtheorem{definition}[theorem]{Definition}
\newtheorem{remark}[theorem]{Remark}
\newtheorem{assumption}[theorem]{Assumption}
\newcommand{\cG}{\mathcal{G}}
\newcommand{\R}{\mathbb{R}}
\newcommand{\N}{\mathbb{N}}
\newcommand{\Z}{\mathbb{Z}}
\title{Existence of stationary solutions of supercritical nonlinear Schrödinger equations on some metric graphs}
\author{Jack Borthwick and Nabile Boussaïd}
\affil{\small{McGill University, Department of Mathematics and Statistics, Burnside Hall, 805 Sherbrooke West, Montréal, Québec, H3A 0B9, Canada \\
Universit\'{e} Marie et Louis Pasteur,
CNRS, Institut UTINAM,
\'{E}quipe de physique th\'{e}orique,
F-25000 Besan\c{c}on, France}
}
\date{20/11/2024}
\begin{document}

\maketitle

\begin{abstract}
We consider the existence of stationary wave solutions with prescribed mass to a
supercritical nonlinear Schr\"odinger equation on a noncompact connected metric graph without a small mass assumption.
\end{abstract}

\section{Introduction}
In this work, we consider the
existence of stationary wave solutions with prescribed mass to a
supercritical nonlinear Schr\"odinger equation on a \emph{noncompact connected metric graph} $\cG$:
\begin{equation}\label{Eq:NLS}\tag{NLS}
-u''+\lambda u=|u|^{p-2}u,
\end{equation}
with mass supercritical power, that is
$p>6$, and
Kirchhoff
(namely current conservation)
conditions
\begin{equation}\label{Eq:Kirchoff}\tag{KC}
\sum_{{\rm e} \succ {\rm v}}u_{\upharpoonright\rm e}'({\rm v})=0,\, \forall{\rm v} \in \mathcal{V}.
\end{equation}

The first work to settle a very similar question is due to Chang, Jeanjean and Soave~\cite{MR4755505}, in the case where the graph is compact. Since the graph is compact, the energy functional admits a minimum under a mass constraint and the authors considered minimizing sequences. The question of extending such a result to non-compact graphs was then considered by the first author with Chang, Jeanjean and Soave~\cite{BorthwickChangJeanjeanSoaveApp} in the case where the nonlinearity is localized to the compact core of a graph with finitely many edges. There infimum of the energy is not finite. Thus the authors considered the existence of critical points for the mass constrained energy through the analysis of Palais-Smale sequences. The existence of such sequences with the some informarmation on the Morse index asked for a new abstract theory on bounded Palais-Smale sequence considered by these authors~\cite{BCJS-2022}. The restriction to a localized nonlinearity and the finite number of edges was then released by Dovetta, Jeanjean and Serra~\cite{dovetta2024normalized} up to a smallness assumption on the mass together with some geometrical assumptions on the graph. Interestingly, they considered periodic graphs. The purpose of the present analysis is to release as much as possible the small mass constraint and the the geometrical assumption. We make an assumptions on the bottom of the spectrum and the absence of positive solutions with zero frequency. We provide examples of graphs fulfilling these assumptions.

\section{Framework}

Recall a metric graph is a couple $\cG = (\mathcal{E}, \mathcal{V})$, where $\mathcal{E}$ is a
set of real intervals called edges\footnote{Each interval is from a different copy of $\R$. The same interval can appear several times.}
and $\mathcal{V}$ is the set of the endpoints, called vertices\footnote{More precisely this is a partition of the set of endpoints of
$\mathcal{E}$ again from different copies of $\R$. Elements from the same set in the partition are identified to model the connection between edges. An edge is closed if its ends are identified. Two vertices can be connected by several edges.},
of the edges
and ${\rm e} \succ {\rm v}$ denotes all the edges ${\rm e}\in {\mathcal E}$ such that ${\rm v}$ is an endpoint of ${\rm e}$ while $u_{\upharpoonright\rm e}$ is the restriction of $u$ to ${\rm e}$.
Any bounded edge ${\rm e}$ is identified with a closed bounded interval\footnote{There is no orientation and each edge is determined up to an isometry and hence only its length is relevant.},
e.g. $I_{\rm e}=[0,|{\rm e}|]$ (where
$|{\rm e}|$ is the length of ${\rm e}$), while each unbounded edge is identified with a closed half-line, e.g. $I_{\rm e}=[0,+\infty)$.
The notation $u_{\upharpoonright\rm e}'({\rm v})$ stands for the derivative outward to the vertex into the edge, e.g. $u_{\upharpoonright\rm e}'(0)$,
if the vertex ${\rm v}$ is identified with $0$,
$-u_{\upharpoonright\rm e}'(|{\rm e}|)$, if it is identified with
$\pi_{\mathrm{e}}$.

We prove the existence of solutions
to~\eqref{Eq:NLS}
with prescribed norm (or mass)
as critical points of the energy functional
\begin{equation}\label{Eq:Energy}\tag{E}
E(u,\cG ):=\frac{1}{2}\int_{\cG }|u'|^2\, \,\mathrm{d}x-\frac{1}{p}\int_{\cG }|u|^p\, \,\mathrm{d}x
\end{equation}
under the constraint
\begin{equation}\label{Eq:Constraint}
\int_{\cG }|u|^2\,\,\mathrm{d}x=\mu>0.
\end{equation}
In other words, we prove the existence of critical points of the functional $E(u,\cG )$ constrained on the $L^2$-sphere, e.g. in
\[
H_{\mu}^1(\cG ):= \left\{u\in H^1(\cG ): \int_{\cG }|u|^2\,\,\mathrm{d}x=\mu\right\}.
\]
If $u\in H^1_{\mu}(\cG )$ is such a critical point, then there exists a Lagrange multiplier $\lambda\in \mathbb{R}$ such that $u$ satisfies the following problem
\begin{equation}\label{Eq:Kirchhoff}
\begin{aligned}
\forall {\rm e} \in \mathcal{E},\quad
&
-u_{\upharpoonright\rm e}''+\lambda u_{\upharpoonright\rm e}=|u_{\upharpoonright\rm e}|^{p-2}u_{\upharpoonright\rm e}
\\
\forall {\rm v} \in \mathcal{V},\quad
&
\sum_{{\rm e} \succ {\rm v}}u_{\upharpoonright\rm e}'({\rm v})=0.
\end{aligned}
\end{equation}
For our proof to hold, we make the two following assumptions.

\begin{assumption}\label{Assump:EssSpec}
The spectrum of $-\Delta_\cG$, the Kirchoff lapalacian, is such that
\[
 \inf\sigma(-\Delta_\cG)=0.
\]
\end{assumption}
See Section~\ref{Sec:EssSpec} for examples.

\begin{assumption}\label{Assump:NoZeroPostiveZerostate}
If $u\in H^1(\cG )$
satisfies~\eqref{Eq:Kirchhoff} with $\lambda=0$ and $u\geq0$
then $u\equiv 0$.
\end{assumption}
See Section~\ref{Sec:NoZeroPostiveZerostate} for examples.
\begin{remark}
The Gagliardo-Nirenberg inequality, see~\eqref{Eq:GN} below, together with
the ideas from the Appendix,
impose some constraint on the $L^\infty$-norm of solutions to~\eqref{Eq:Kirchhoff} with respect to the mass $\mu$. From the ideas of the Appendix, on the other hand, the maximum of a positive solution is bounded by means of the supremum of the lengths of the edges. Therefore, if $\mu$ is small no positive solution to~\eqref{Eq:Kirchhoff} is available. Together with Remark~\ref{Rem:EssSpec}, below, this shows that Theorem~\ref{Thm:Main}, below, is valid if $\mu$ is small enough without imposing Assumptions~\ref{Assump:EssSpec} nor~\ref{Assump:NoZeroPostiveZerostate}.
\end{remark}

Here is the main result of this analysis.
\begin{theorem}\label{Thm:Main}
Let
$\cG$ be
noncompact connected metric graph
such that Assumptions~\ref{Assump:EssSpec} and~\ref{Assump:NoZeroPostiveZerostate} hold then for any fixed $\mu>0$,
there exists $(u, \lambda)\in H^1_\mu(\cG)\times \mathbb{R}^+$, $u>0$, which solves
\eqref{Eq:Kirchhoff}.
\end{theorem}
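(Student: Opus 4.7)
The plan is to realize $(u,\lambda)$ as a mountain-pass critical point of the constrained functional $E(\cdot,\cG)|_{H^1_\mu(\cG)}$, with $\lambda$ emerging as the Lagrange multiplier. The graph Gagliardo--Nirenberg inequality gives, for $u\in H^1_\mu(\cG)$,
\[
 \int_\cG |u|^p\,\mathrm{d}x \;\leq\; C\,\mu^{(p+2)/4}\, \|u'\|_{L^2(\cG)}^{(p-2)/2},
\]
and since $(p-2)/2>2$ because $p>6$, there exist $R_0,c_0>0$ with $E(u,\cG)\geq c_0$ whenever $u\in H^1_\mu(\cG)$ and $\|u'\|_{L^2(\cG)}=R_0$. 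Assumption~\ref{Assump:EssSpec} furnishes a function $u_0\in H^1_\mu(\cG)$ with $\|u_0'\|_{L^2(\cG)}<R_0$ and $E(u_0,\cG)$ arbitrarily small, obtained by suitably normalising an approximate $0$-eigenfunction of $-\Delta_\cG$; a bump supported on an unbounded edge, whose existence follows from non-compactness and connectedness of $\cG$, can be rescaled along that edge to produce $u_1\in H^1_\mu(\cG)$ with $E(u_1,\cG)<0$. Joining $u_0$ to $u_1$ by a continuous path in $H^1_\mu(\cG)$ then defines a mountain-pass value $c_\mu\in(0,+\infty)$.

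Because $p>6$ precludes automatic boundedness of Palais--Smale sequences, I would next apply Jeanjean's monotonicity trick to the perturbed family $E_\tau(u,\cG)=\tfrac{1}{2}\int_\cG|u'|^2\,\mathrm{d}x-\tfrac{\tau}{p}\int_\cG|u|^p\,\mathrm{d}x$ for $\tau\in[1-\delta,1]$, or, equivalently, to an augmented functional built from a mass-preserving fiber action adapted to $\cG$, in the spirit of the abstract framework of~\cite{BCJS-2022}. For almost every $\tau$ the mountain-pass value $c_{\mu,\tau}$ is differentiable, yielding a bounded PS sequence together with an asymptotic Pohozaev-type identity; a diagonal argument $\tau\to 1$ then gives a bounded PS sequence $(u_n)\subset H^1_\mu(\cG)$ for $E$ at level $c_\mu$, with Lagrange multipliers $\lambda_n\to\lambda$. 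Extracting $u_n\rightharpoonup u_\infty$ in $H^1(\cG)$ and replacing $u_n$ by $|u_n|$, the weak limit is nonnegative and satisfies~\eqref{Eq:Kirchhoff} with multiplier $\lambda$; a direct integrability argument on unbounded edges excludes $\lambda<0$, while Assumption~\ref{Assump:NoZeroPostiveZerostate} excludes $\lambda=0$ as soon as $u_\infty\not\equiv 0$, so $\lambda>0$ and the strong maximum principle on $\cG$ upgrades $u_\infty$ to $u_\infty>0$.

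The hard part will be to prevent vanishing of $(u_n)$ and to rule out mass escaping to infinity, since $\cG$ carries no global translation symmetry. The combination $c_\mu>0$ with the asymptotic Pohozaev relation forces $\|u_n\|_{L^p(\cG)}$ to stay bounded away from zero, so by a graph-adapted Lions-type lemma there is a sequence of vertices $({\rm v}_n)$ along which $(u_n)$ does not vanish at unit scale. Any mass that slides to infinity along an escaping sequence of vertices should then be captured by profiles on limit graphs obtained as pointed limits of $(\cG,{\rm v}_n)$; each such profile is a nonnegative $H^1$-solution of a Kirchhoff NLS with nonnegative Lagrange multiplier, and the corresponding version of Assumption~\ref{Assump:NoZeroPostiveZerostate} on the limit graph, which one must argue is inherited from the assumption on $\cG$, forces every escaping profile to vanish. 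Consequently no mass is lost, $u_n\to u_\infty$ strongly in $L^2(\cG)$ and in $L^p(\cG)$, the limit satisfies $\|u_\infty\|_{L^2(\cG)}^2=\mu$, and $(u_\infty,\lambda)$ is the desired solution pair.
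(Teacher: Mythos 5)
Your overall skeleton (mountain-pass geometry via Gagliardo--Nirenberg, the perturbed family $E_\tau$ and the monotonicity trick, then passage to the limit $\tau\to 1$) matches the paper's, but two essential ingredients are missing and one step is asserted where the real work lies. First, you never use the approximate Morse index bound, which is the paper's central tool. The sign of the (almost) Lagrange multipliers is obtained in the paper by combining the bound $\tilde m_{\zeta_n}(u_n)\le 1$ from Theorem~\ref{Thm:mp_geom} with Lemma~\ref{L-eigenvalue}: Assumption~\ref{Assump:EssSpec} forces $\inf\sigma_{\mathrm{ess}}(-\Delta_\cG)=0$, so for $\lambda_n<0$ the Hessian would be negative on an infinite-dimensional subspace, contradicting the index bound. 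Your substitute, ``a direct integrability argument on unbounded edges excludes $\lambda<0$'', presumes the graph has an unbounded edge; the hypotheses only require noncompactness, and the motivating examples ($\Z$-periodic graphs) have all edges bounded. Moreover you need the sign of $\lambda_n$ \emph{before} knowing $u_\infty\not\equiv 0$, since it is precisely $\lambda_\infty\ge 0$ together with $c>0$ that rules out $u_\infty\equiv 0$; your Lions-lemma/pointed-limit-graph profile decomposition is both unnecessary in the paper's scheme and unsubstantiated (there is no argument that Assumption~\ref{Assump:NoZeroPostiveZerostate} passes to pointed limits of $(\cG,{\rm v}_n)$, and for general graphs it need not).

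Second, the phrase ``a diagonal argument $\tau\to 1$ then gives a bounded PS sequence'' papers over the hardest step. The monotonicity trick yields, for almost every $\tau$, a bounded PS sequence \emph{for that $\tau$} and hence a critical point $u_\tau$; nothing prevents $\|u_{\tau_n}\|_{H^1}$ from blowing up as $\tau_n\to 1$, and in the mass-supercritical regime there is no fiber/dilation map on a general graph to control this. The paper's Proposition~\ref{Prop:Main} devotes its entire proof to this point: the Pohozaev identity reduces boundedness of $(u_n)$ to boundedness of $(\lambda_n)$, and $\lambda_n\to+\infty$ is excluded by a quantitative ODE analysis on each edge (Corollary~\ref{Cor:BoundHamiltonian} together with the period and $L^2$-norm asymptotics \eqref{Eq:ToConcludePeriod}--\eqref{Eq:ToConcludeNorm} of the Appendix), which shows the prescribed mass $\mu$ would be violated. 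Without an argument of this type your proof does not close.
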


\paragraph{Notations.}

A metric graph $\cG$ is naturally endowed with a metric space structure corresponding to the infimum of the path lengths between two points. The corresponding space of complex valued continuous functions is denoted $C(\cG)$ and $C_c(\cG)$ is the subspace of continuous functions with compact support.

The space $L^p(\cG)$ is the set
\[
 \left\{(u_{\upharpoonright\rm e})_{{\rm e}\in {\mathcal E}}\in \otimes_{{\rm e}\in {\mathcal E}} L^p({\rm e}),\, \sum_{{\rm e}\in {\mathcal E}} \|u_{\upharpoonright\rm e}\|_p^p<+\infty\right\}
\]
endowed with the norm
\[
 \|u\|_{L^p(\cG)}= \left(\sum_{\mathrm{e}\in {\mathcal E}} \|u_{\upharpoonright\rm e}\|_{L^p(\mathrm{e})}^p\right)^{1/p}.
\]
The Sobolev space $H^1(\cG)$ is the
completions of $C_c(\cG)$ with respect to the norm
\[
\|u\|_{H^1(\cG)} = \left(\sum_{{\rm e}\in {\mathcal E}} \left(\|u_{\upharpoonright\rm e}'\|_{L^2(\mathrm{e})}^2 + \|u_{\upharpoonright\rm e}\|_{L^2(\mathrm{e})}^2\right)\right)^{1/2}
\]
or equivalently the subspace of $L^2(\cG)$:
\[
 \left\{u\in C(\cG)\cap L^2(\cG),\, (u_{\upharpoonright\rm e}')_{\rm e}\in L^2(\cG)\right\}.
\]

\section{Properties of quantum graphs and examples}

A quantum graph is a metric graph endowed with a self-adjoint operator. A usual reference for quantum
graphs is the monograph by G. Berkolaiko and P. Kuchment~\cite{BerkolaikoKuchment}.

The quadratic form
\[
 u\to \int_\cG |u'|^2
\]
is non negative and closed, see~\cite[Section VIII.8]{ReedSimon},
on $H^1_0(\cG)$ and
$H^1(\cG)$. This corresponds
respectively
to
the Dirichlet and Neumann
laplacian
with Kirchoff boundary conditions at the
inner
vertices.

The degree, $d({\rm v})$, of a vertex ${\rm v}$ is defined as the number of edges ${\rm v}$ such that ${\rm e}\succ {\rm v}$. Vertices of degree $1$ are endpoints of the graph. If there are no such vertex then Dirichlet and Neumann laplacians coincide. Note that vertices of degree $2$ are in a sense inessential since on can replace the two edges by one having the same total length. There is one exception if the graph has one vertex and thus one edge, this then correspond to the torus of dimension $1$. This could be overcome if the degree count only distinct edges then in the torus example, the one vertex is of degree one and not an end. We prefer to avoid this and work with the torus exception.

Although,
in this analysis,
we can consider the Dirichlet laplacian,
we will focus on the Neumann one since for vertices of degree $1$ they coincide with the Kirchoff condition. We now denote it $-\Delta_\cG$ and call it Kirchoff laplacian.

\subsection{Spectral properties of the Kirchoff laplacian}\label{Sec:EssSpec}

The main goal of this subsection is to discuss examples for which Assumption~\ref{Assump:EssSpec}
holds.

For an account of the knowledge on the spectrum on quantum graphs, we refer to the monograph by Kurasov~\cite{KostenkoNicolussi}. In particular, see~\cite[Remark 2.7]{KostenkoNicolussi},
$0$ is an eigenvalue if and only if non zero constant functions are integrable. Moreover, if the
graph is unbounded, in the sense that
\[
 \sum_{{\rm e}\in {\mathcal E}}|{\rm e}|=+\infty,
\]
the bottom of the spectrum is zero if the bottom of the essential spectrum is zero.

A condition ensuring that the bottom of the spectrum is zero, is, see~\cite[Corollary 3.18.(iii)]{KostenkoNicolussiBook}, that intrinsic size is infinite, that is
\[
 \sup_{{\rm e}\in {\mathcal E}}|{\rm e}|=+\infty.
\]
Indeed, using Rayleigh quotient,
\[
 \inf\sigma(-\Delta_\cG)=\inf_{f\in H^1(\cG), f\neq 0}\frac{\|f'\|_2}{\|f\|_2},
\]
this follows from consider the usual spreading tent functions on sequences of edges of lengths tending to infinity.

But this not a sufficient condition, we can refer to~\cite[Appendix]{BoniDovettaSerra}, where it is shown that periodic graphs, see~\cite[Definition 4.1.1.]{BerkolaikoKuchment}, the infimum of the Rayleigh quotient is $0$.

Examples of graph such that $\inf\sigma(-\Delta_\cG)>0$ are provided by equilateral Bethe lattices
(equilateral tree with vertices of constant degree larger than $3$), see~\cite[Example 8.3]{KostenkoNicolussi}, or exponentially growing antitrees, see~\cite[Example 8.6]{KostenkoNicolussi}.
We refer to~\cite[Appendix]{KostenkoNicolussi} and~\cite[Chapter 8]{KostenkoNicolussiBook} for a more detailed discussion where other examples are presented.

\subsection{Concave functions}\label{Sec:NoZeroPostiveZerostate}

In this subsection, we discuss the validity of Assumption~\ref{Assump:NoZeroPostiveZerostate}.
Notice that if $u\in H^2(\cG)$ and $u>0$ then
\[
u_{\upharpoonright\rm e}''=-u_{\upharpoonright\rm e}^{p-1}
\]
and $u$ is a concave function on each edge of the graph.

Let us recall that on $\R$ a regular concave function is always below its tangents. Unless it is constant, some of the tangents vanish and hence there are no regular positive concave function on the real line. On a half-line such as $(0,+\infty)$, $x\mapsto\ln(1+x)$ provides an example of regular positive concave function.
While a non constant regular positive concave function with a critical point
on an open half line vanishes somewhere. In the context of metric graphs show that of there is an infinite edge then there is no positive concave function which tends to $0$ at its infinite end.

On a rooted regular tree\footnote{A tree is a connected and simply connected graph, that is a connected graph with no closed path. A path is a connected sequence of edges where two consecutive edges are distinct. A root on a tree is a distinguished degree one vertex.}
of degree $2$, such that the edges of the $n$-th level from the root
are of length $2^{n-1}$, there is positive function affine on each edge of the $n$-th level to $2^{-n+1}-2^{-n}x$,
each is identified with $[0,2^{n-1}]$. So we can exclude positive continuous piecewise concave functions tending to $0$ and satisfying the Kirchoff condition. Note that here the function is also piecewise convex.

Let $\cG$ be a $\Z$-periodic graph with fondamental domain $W_0$, see~\cite[Definition 4.1.1.]{BerkolaikoKuchment}.
Let us denote by $\tau$ the corresponding group action of $\Z$ on $\cG$. Let $W_k=\tau(k)W_0$, $k\in\Z$. As a set, $\cG=\cup_{k\in\Z} W_k$. An edge is called entering in $W_0$, if either it is in $W_{-1}$ with a final vertex in $W_0$
or it is in $W_0$ with a starting vertex in $W_{-1}$. Denote the corresponding set by ${\cal E}^+_0$. Similarly, define entering edges to $W_k$: ${\cal E}^+_k=\tau(k){\cal E}^+_0$. Define exiting edges ${\cal E}^-_0$ and then ${\cal E}^-_k$. Note that ${\cal E}^+_k={\cal E}^-_{k-1}$ and all these sets are finite and their cardinals are uniformly bounded.

Now let $\widetilde{W}_k={\cal E}^+_k\cup W_k \cup {\cal E}^-_k$ as a subgraph of $\cG$, in the sense that all the common vertices are identified. Hence $\widetilde{W}_k$ includes $W_k$ and all the edges connected to it as well as the vertices.
Using Kirchoff condition, we obtain
\[
 \sum_{{\rm v}\in W_k } \sum_{{\rm e}\succ {\rm v},{\rm e}\in \widetilde{W}_k}u_{\upharpoonright{\rm e}}'({\rm v})=0
\]
or
\[
 \sum_{{\rm e}\in \widetilde{W}_k}
 \sum_{{\rm e}\succ {\rm v},{\rm v}\in W_k }
 u_{\upharpoonright{\rm e}}'({\rm v})=0.
\]
Note that if $u''< 0$ then for any ${\rm v}\in W_k$
\[
\sum_{{\rm e}\in {\cal E}, {\rm e}\succ {\rm v}}u_{\upharpoonright{\rm e}}'(v)< 0
\]
and hence
\[
 \sum_{{\rm v}\in \widetilde{W}_k\setminus W_k } \sum_{{\rm e}\succ {\rm v},{\rm e}\in \widetilde{W}_k}u_{\upharpoonright{\rm e}}'({\rm v})>0
\]
or
\[
 \sum_{({\rm e}, {\rm v})\in {\cal E}^-_k,\, {\rm e}\succ {\rm v}}u_{\upharpoonright{\rm e}}'({\rm v})
 <
  \sum_{({\rm e}, {\rm v})\in {\cal E}^+_k,\, {\rm e}\succ {\rm v}}u_{\upharpoonright{\rm e}}'({\rm v}).
\]
Let
\[
  \sigma_k:=
   \sum_{({\rm e}, {\rm v})\in {\cal E}^-_k,\, {\rm e}\succ {\rm v}}u_{\upharpoonright{\rm e}}'({\rm v}),
\]
using again $u''<0$, we eventually get $\sigma_{k-1}<\sigma_k$. If $u\in H^2(\cG)$ then $u'\to 0$ at infinity and so does the sequence $(\sigma_k)_{k\in \Z}$. This is a contradiction. Therefore $\Z$-periodic graphs satisfy Assumption~\ref{Assump:NoZeroPostiveZerostate}.

\paragraph{Aknowledgements}

This work has been supported by the EIPHI Graduate School (contract ANR-17-EURE-0002) and  by the Bourgogne-Franche-Comté Region and funded in whole or in part by the French National Research Agency (ANR) as part of the QuBiCCS project "ANR-24-CE40-3008-01``.

\section{Some preliminary definitions and results}

\emph{In the rest of this analysis, we consider $\cG$ to be
noncompact connected metric graph such that Assumptions~\ref{Assump:EssSpec} and~\ref{Assump:NoZeroPostiveZerostate} hold.}

Our strategy closely follows~\cite{BorthwickChangJeanjeanSoaveApp} with some important modifications to include a distributed nonlinear potential.

We first consider an auxiliary family of energy functionals
$E_{\rho}(\cdot,\cG ): H^1(\cG ) \to \mathbb{R}$,
$\rho$ in $\left[\frac12, 1 \right]$,
given by
\begin{align}\label{Eq:Energyrho}
 E_{\rho}(u,\cG ):=\frac{1}{2}\int_{\cG }|u'|^2\,\mathrm{d}x
 -\frac{\rho }{p}\int_{\cG }|u|^p\,\mathrm{d}x, \qquad \forall u\in H^1(\cG ).
\end{align}
Since we are focusing in $\rho$ close to $1$. The condition $\rho\geq 1/2$ is arbitrary. It is meant to emphasize that we do not consider
$\rho$ vanishing.

We make a crucial use of the abstract analysis~\cite{BCJS-2022} and more precisely~\cite[Theorem 1]{BCJS-2022}. Let us recall this result. Beforehand, let us mention that the existence result we are aiming to, boils down to the
convergence of Palais-Smale sequences induced by a mountain pass structure on the functional
$E_\rho(u,\cG)$ from~\eqref{Eq:Energyrho} constrained to $H_{\mu}^1(\cG )$. The lack of compactness is compensated by a bound on the corresponding sequence of Morse indices.

Let $\left(E,\langle \cdot, \cdot \rangle\right)$ and $\left(H,(\cdot,\cdot)\right)$ be two \emph{infinite-dimensional} Hilbert spaces and assume that $E\hookrightarrow H \hookrightarrow E'$,
with continuous injections.  For simplicity, we assume that the continuous injection $E\hookrightarrow H$ has norm at most $1$ and identify $E$ with its image in $H$.
Set
\[\|u\|^2:=\langle u,u \rangle,\quad |u|^2:=(u,u), \, u\in E,\]
and, for $\mu>0$,
\[
S_\mu:= \{ u \in E\big|\, |u|^2=\mu \}.
\]
In the context of this analysis, we shall have $E=H^1(\cG)$ and $H=L^2(\cG)$.
Clearly, $S_{\mu}$ is a smooth submanifold of $E$ of codimension $1$. Its tangent space at a given point $u \in S_{\mu}$ can be considered as the closed codimension $1$ subspace of $E$ given by:
\[  T_u S_{\mu}= \{v \in E \big|\, (u,v) =0 \}.\]
In the following definition, we denote by $\|\cdot\|_*$ and $\|\cdot\|_{**}$, respectively, the operator norm of $\mathcal{L}(E,\R)$ and of $\mathcal{L}(E,\mathcal{L}(E,\R))$.

\begin{definition}
Let $\phi : E \rightarrow \mathbb{R}$ be a $C^2$-functional on $E$ and $\alpha \in (0,1]$. We say that $\phi'$ and $\phi''$ are $\alpha$-H\"older continuous on bounded sets if for any $R>0$ one can find $M=M(R)>0$ such that, for any $u_1,u_2\in B(0,R)$:
\begin{equation}\label{Holder}
||\phi'(u_1)-\phi'(u_2)||_*\leq M ||u_1-u_2||^{\alpha}, \quad ||\phi''(u_1)-\phi''(u_2)||_{**} \leq M||u_1-u_2||^\alpha.
\end{equation}
\end{definition}

\begin{definition}\label{def D}
Let $\phi$ be a $C^2$-functional on $E$. For any $u\in E$, we define the continuous bilinear map:
\[ D^2\phi(u)=\phi''(u) -\frac{\phi'(u)\cdot u}{|u|^2}(\cdot,\cdot). \]
\end{definition}

\begin{definition}\label{def: app morse}
Let $\phi$ be a $C^2$-functional on $E$. For any $u\in S_{\mu}$ and $\theta >0$, we define
 the \emph{approximate Morse index} $\tilde m_\theta(u)$ by
\[
\sup \left\{\dim\,L\mid \, L \text{ subspace of $T_u S_{\mu}$ s.t.: }
D^2\phi(u)[\varphi, \varphi]\leq-\theta \|\varphi\|^2, \, \forall \varphi \in L \right\}.
\]
If $u$ is a critical point for the constrained functional $\phi|_{S_{\mu}}$ and $\theta=0$, we say that this is the \emph{Morse index of $u$ as constrained critical point}.
\end{definition}

The following abstract theorem was established in \cite{BCJS-2022}. Its derivation is based on a combination of ideas from \cite{FG-1992,FG-1994,J-PRSE1999} implemented in a convenient geometric setting.  Related theorems, but dealing with unconstrained functionals, are developed in \cite{BeRu,LoMaRu}.

\begin{theorem}[Theorem 1 in \cite{BCJS-2022}]\label{Thm:mp_geom}
Let $I \subset (0,+\infty)$ be an interval and consider a family of $C^2$ functionals $\Phi_\rho: E \to \mathbb{R}$ of the form
\[
\Phi_\rho(u) = A(u) -\rho B(u), \qquad \rho \in I,
\]
where $B(u) \ge 0$ for every $u \in E$, and
\begin{equation}\label{hp coer}
\text{either $A(u) \to +\infty$~ or $B(u) \to +\infty$ ~ as $u \in E$ and $\|u\| \to +\infty$.}
\end{equation}
Suppose moreover that $\Phi_\rho'$ and $\Phi_\rho''$ are $\alpha$-H\"older continuous on bounded sets for some $\alpha \in (0,1]$.
Finally, suppose that there exist $w_1, w_2 \in S_{\mu}$ (independent of $\rho$) such that, setting
\[
\Gamma= \left\{ \gamma \in C([0,1],S_{\mu})\big| \ \gamma(0) = w_1, \quad \gamma(1) = w_2\right\},
\]
we have
\begin{equation*}
c_\rho:= \inf_{\gamma \in \Gamma} \ \max_{ t \in [0,1]} \Phi_\rho(\gamma(t)) > \max\{\Phi_\rho(w_1), \Phi_\rho(w_2)\}, \quad \rho \in I.
\end{equation*}
Then, for almost every $\rho \in I$, there exist sequences $\{u_n\} \subset S_{\mu}$ and $\zeta_n \to 0^+$ such that, as $n \to + \infty$,
\begin{enumerate}
\item\label{Thm:mp_geom_1} $\Phi_\rho(u_n) \to c_\rho$;
\item\label{Thm:mp_geom_2} $||\Phi'_\rho|_{S_{\mu}}(u_n)||_* \to 0$;
\item\label{Thm:mp_geom_3} $\{u_n\}$ is bounded in $E$;
\item\label{Thm:mp_geom_4} $\tilde m_{\zeta_n}(u_n) \le 1$.
\end{enumerate}
\end{theorem}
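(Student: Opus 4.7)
The plan is to combine Jeanjean's monotonicity trick~\cite{J-PRSE1999}, which produces bounded Palais--Smale sequences for monotone families of mountain-pass functionals, with a Fang--Ghoussoub-type~\cite{FG-1992,FG-1994} quantitative deformation that simultaneously records approximate Morse index information.

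First I would note that $B(u)\geq 0$ forces $\rho\mapsto c_\rho$ to be non-increasing on $I$, hence differentiable on a subset $I_0\subset I$ of full Lebesgue measure; it is enough to produce the sequence $\{u_n\}$ at each fixed $\rho\in I_0$. For boundedness, pick $\rho_k\nearrow\rho$ and almost-optimal paths $\gamma_k\in\Gamma$ with $\max_t\Phi_{\rho_k}(\gamma_k(t))\leq c_{\rho_k}+(\rho-\rho_k)^2$. For any point $u$ on $\gamma_k$ with $\Phi_\rho(u)\geq c_\rho-(\rho-\rho_k)$, the identity $\Phi_{\rho_k}(u)=\Phi_\rho(u)+(\rho-\rho_k)B(u)$ yields
\[
0\leq B(u)\leq \frac{c_{\rho_k}-c_\rho}{\rho-\rho_k}+1+(\rho-\rho_k),
\]
which is uniformly bounded in $k$ by differentiability at $\rho$. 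Then $A(u)=\Phi_{\rho_k}(u)+\rho_k B(u)$ is uniformly bounded as well, so the coercivity hypothesis~\eqref{hp coer} forces $\|u\|\leq R$ for some $R=R(\rho)$. A diagonal extraction then yields a bounded sequence in $S_\mu$ at level $c_\rho$, covering items~\ref{Thm:mp_geom_1} and~\ref{Thm:mp_geom_3}.

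To upgrade this sequence to one satisfying~\ref{Thm:mp_geom_2} and~\ref{Thm:mp_geom_4}, I would argue by contradiction: suppose that for some fixed $\varepsilon,\theta>0$ every $u\in S_\mu$ with $\|u\|\leq R+1$, $|\Phi_\rho(u)-c_\rho|<2\varepsilon$ and $\|\Phi_\rho'|_{S_\mu}(u)\|_*<2\varepsilon$ has $\tilde m_\theta(u)\geq 2$. Using the $\alpha$-Hölder continuity of $\Phi_\rho''$ and the local continuity of spectral projectors associated with $D^2\Phi_\rho(u)$ (Definition~\ref{def D}), one selects a continuous two-dimensional field $V(u)\subset T_u S_\mu$ of strongly $D^2\Phi_\rho$-negative directions across the critical slab. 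A two-parameter pseudogradient deformation on the sphere, in the spirit of~\cite{FG-1992,FG-1994}, would then homotope any competitor $\gamma_k$ coming from the previous step into a new path in $\Gamma$ whose maximum lies strictly below $c_\rho$, contradicting the very definition of $c_\rho$. The fact that $V(u)$ is two-dimensional is precisely what allows paths to be rerouted around the putative critical set while remaining connected to $w_1$ and $w_2$.

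The main obstacle is the simultaneous control of level, boundedness, and Morse index along a single sequence. Each of the three conclusions admits a classical argument in isolation, but fusing them demands a pseudogradient flow on $S_\mu$ that (a) preserves the constraint, (b) fixes the endpoints $w_1, w_2$, (c) remains within the predetermined bounded set where the Jeanjean step is effective, and (d) is built from the two-dimensional negative bundle uniformly across the $c_\rho$-slab. Carrying out (a)--(d) within a single coherent flow, with only the regularity afforded by $\alpha$-Hölder second derivatives, is the delicate geometric step; this is the essential content of the framework developed in~\cite{BCJS-2022}.
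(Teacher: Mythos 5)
The paper does not actually prove this statement: it is imported verbatim as Theorem~1 of \cite{BCJS-2022}, accompanied only by the remark that its derivation combines ideas from \cite{FG-1992,FG-1994,J-PRSE1999}. Your sketch correctly identifies exactly those ingredients---Jeanjean's monotonicity trick (monotone dependence of $c_\rho$ on $\rho$, a.e.\ differentiability, and the resulting uniform bound on the high parts of almost-optimal paths) together with a Fang--Ghoussoub-type deformation carrying approximate Morse index information---so at the level of strategy you are aligned with what the cited reference does. The computation $0\le B(u)\le \frac{c_{\rho_k}-c_\rho}{\rho-\rho_k}+1+(\rho-\rho_k)$ on the set where $\Phi_\rho(u)\ge c_\rho-(\rho-\rho_k)$ is the standard and correct opening move.

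Nevertheless, as a proof the text has genuine gaps, which you partly concede in your final paragraph. First, the boundedness step does not by itself ``cover items (1) and (3)'': the points you extract from the paths $\gamma_k$ lie in a bounded set near the level $c_\rho$, but nothing yet makes the constrained gradient small there, and a ``diagonal extraction'' produces no Palais--Smale sequence; all four conclusions must come out of a single deformation argument run inside the bounded slab, so the items cannot be established separately in the order you present. Second, and more seriously, the step ``one selects a continuous two-dimensional field $V(u)\subset T_uS_{\mu}$ of strongly $D^2\Phi_\rho$-negative directions across the critical slab'' is precisely the hard point, and the justification offered (local continuity of spectral projectors) is not available: $D^2\Phi_\rho(u)$ is only a bounded symmetric bilinear form depending $\alpha$-H\"older-continuously on $u$, there is no spectral gap along which to propagate projectors, and a pointwise choice of negative $2$-planes cannot in general be glued into a continuous subbundle. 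What the H\"older estimate does give is that a fixed negative subspace at $u$ remains (slightly less) negative at nearby points; the actual construction uses a locally finite covering with locally constant negative subspaces, patched through a carefully built pseudogradient flow that preserves $S_\mu$, fixes $w_1,w_2$, and stays in the bounded region. That construction is the essential content of \cite{BCJS-2022} and is not reproduced by your outline; as written, the theorem is asserted rather than proved.
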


\begin{remark}\label{Rem:WeakCV}
An immediate observation, see \cite[Remarks 1.3]{BCJS-2022}, following from Theorem~\ref{Thm:mp_geom}.(\ref{Thm:mp_geom_2})-(\ref{Thm:mp_geom_3}) is that
\begin{equation}\label{free-gradient}
 \Phi'_{\rho}(u_n) + \displaystyle \lambda_n (u_n, \cdot) \to 0 \, \mbox{ in }  E' \mbox{ as } n \to + \infty
\end{equation} where we have set
\begin{equation}\label{def-almost-Lagrange}
\lambda_n := - \displaystyle \frac{1}{\mu}( \Phi'_\rho(u_n)\cdot u_n).
\end{equation}
The sequence $\{\lambda_n\} \subset \R$ defined in \eqref{def-almost-Lagrange}
is called
sequence of \emph{almost Lagrange multipliers}.

\end{remark}

\begin{remark}\label{Rem:LouisProperty}
 Theorem~\ref{Thm:mp_geom}.(\ref{Thm:mp_geom_4}) directly implies that if there exists a subspace $W_n \subset T_{u_n}S_{\mu}$ such that
\begin{equation}\label{*lem-Louis1}
D^2\Phi_{\rho}(u_n)[w,w] = \Phi''_{\rho}(u_n)[w,w] + \displaystyle \lambda_n (w,w) < - \zeta_n ||w||^2,\, \forall  w \in W_n \setminus \{0\},
\end{equation}
then necessarily $\dim W_n \leq 1$.
\end{remark}

\section{Mountain pass solutions}\label{sec: appr}

To show that the family  $E_\rho(\cdot\,\cG)$ enters into the framework of Theorem~\ref{Thm:mp_geom} we first need to show that it has a mountain pass geometry on $H^1_\mu(\cG)$ uniformly with respect to $\rho\in \left[\frac12, 1 \right]$.

\paragraph{Gagliardo-Nirenberg inequalities on graphs.}
For every
graph $\cG$, recall Gagliardo-Nirenberg inequalities :
for any $p\geq2$, there exists $K_{p,\cG}>0$ (depending on $p$ and $\cG$ only) such that
for all $u$ in $H^1(\cG)$
\begin{equation}\label{Eq:GN}
\|u\|_{p}^p\leq K_{p,\cG}\|u\|_{2}^{\frac p2+1}\|u'\|_{2}^{\frac p2-1},\quad
\|u\|_{\infty}\leq \sqrt{2}\|u\|_{2}^{\frac12}\|u'\|_{2}^{\frac12}.
\end{equation}
We refer for instance \cite[Section 2]{AST_JFA2016}.  Note that the second statement follows from the fundamental theorem of calculus (e.g.~\cite[Lemma 1.2.8]{BK}) while the former
is an interpolation of the $L^2(\cG)$-norm with the second. We also refer to~\cite{Esteban20} for further comments and bibliographical aspects.

\paragraph{Mountain pass geometry.}
\begin{lemma}\label{MP-geometry}
For every $\mu>0$,
there exist $w_1, w_2 \in S_\mu$ independent of $\rho \in \left[\frac12,1\right]$ such that
\begin{align*}
c_{\rho}:=\inf\limits_{\gamma\in
\Gamma} \, \max\limits_{t\in[0,1]}E_{\rho}(\gamma(t),\cG ) > \max\{E_{\rho}(w_1,\cG ), E_{\rho}(w_2,\cG )\}, \qquad \forall
\rho\in \left[\frac{1}{2},1\right],
\end{align*}
where
$$\Gamma:=\left\{\gamma\in C([0,1], H^1_{\mu}(\cG ))\big| ~\gamma~ \mbox{is continuous},~\gamma(0)=w_1,\gamma(1)=w_2\right\}.$$
Moreover, there exists $\kappa>0$ such that
\[
 \forall \rho\in \left[\frac{1}{2},1\right], c_\rho\geq \kappa.
\]
\end{lemma}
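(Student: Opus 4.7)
My plan is to extract a uniform (in $\rho$) mountain pass geometry from the supercritical Gagliardo--Nirenberg inequality~\eqref{Eq:GN}, using Assumption~\ref{Assump:EssSpec} to produce a spread-out state $w_1$ and a standard $L^2$-preserving local concentration inside one edge to produce a concentrated state $w_2$. All estimates will be kept monotone in $\rho$ so that the same pair $(w_1, w_2)$ works for every $\rho \in [\tfrac{1}{2}, 1]$.

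First I would apply~\eqref{Eq:GN} on $S_\mu$ to write, for every $\rho \in [\tfrac{1}{2}, 1]$ and every $u \in H^1_\mu(\cG)$,
\[
E_\rho(u, \cG) \ge h(\|u'\|_{L^2(\cG)}), \qquad h(s) := \tfrac{1}{2}s^2 - \tfrac{K_{p,\cG}\,\mu^{(p+2)/4}}{p}\, s^{p/2-1}.
\]
Because $p > 6$, the exponent $p/2 - 1 > 2$, so $h$ attains a strictly positive maximum $\kappa := h(s_*) > 0$ at some $s_* > 0$. This will be the uniform barrier: $E_\rho(u, \cG) \ge \kappa$ whenever $\|u'\|_{L^2(\cG)} = s_*$ and $\rho \in [\tfrac{1}{2}, 1]$.

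Next, to build $w_1$, I would use that Assumption~\ref{Assump:EssSpec} together with the Rayleigh quotient characterization gives $\inf\{\|v'\|_2/\|v\|_2 : v \in H^1(\cG) \setminus \{0\}\} = 0$, and pick $w_1 \in S_\mu$ with $\|w_1'\|_2^2 < \min\{s_*^2, \kappa\}$. Monotonicity of $\rho \mapsto E_\rho(\cdot,\cG)$ then yields $E_\rho(w_1, \cG) \le E_{1/2}(w_1, \cG) \le \tfrac{1}{2}\|w_1'\|_2^2 < \tfrac{\kappa}{2}$ for every $\rho$. For $w_2$, I would fix an edge $\mathrm{e}_0 \in \mathcal{E}$, an interior point $x_0$ and $\delta > 0$ with $[x_0 - \delta, x_0 + \delta] \subset \mathrm{e}_0$, choose $\phi \in C_c^\infty((x_0 - \delta, x_0 + \delta))$ normalized by $\|\phi\|_{L^2(\cG)}^2 = \mu$, and for $t \ge 1$ set $\phi_t(x) := t^{1/2}\phi(x_0 + t(x - x_0))$ on $\mathrm{e}_0$, extended by zero elsewhere. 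A direct change of variables gives $\phi_t \in S_\mu$, $\|\phi_t'\|_2 = t\|\phi'\|_2$, and
\[
E_\rho(\phi_t, \cG) = \tfrac{t^2}{2}\|\phi'\|_2^2 - \tfrac{\rho\, t^{p/2-1}}{p}\|\phi\|_p^p \longrightarrow -\infty \quad \text{as } t \to \infty,
\]
so for $t$ sufficiently large the choice $w_2 := \phi_t$ satisfies $\|w_2'\|_2 > s_*$ and $E_\rho(w_2, \cG) \le E_{1/2}(w_2, \cG) < 0$ for every $\rho$.

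Finally, for any $\gamma \in \Gamma$ the map $t \mapsto \|\gamma(t)'\|_{L^2(\cG)}$ is continuous on $[0,1]$, with values strictly below $s_*$ at $t = 0$ and strictly above $s_*$ at $t = 1$, so the intermediate value theorem yields $t_* \in (0,1)$ with $\|\gamma(t_*)'\|_2 = s_*$ and hence $E_\rho(\gamma(t_*), \cG) \ge \kappa$. Taking the infimum over $\gamma$ gives $c_\rho \ge \kappa$ uniformly in $\rho$, which combined with $E_\rho(w_i, \cG) < \tfrac{\kappa}{2}$ for $i = 1, 2$ delivers the strict mountain pass separation. The most delicate ingredient in this plan is the construction of $w_1$: on a compact graph a constant function would do the job, but in the noncompact setting $\inf\sigma(-\Delta_\cG) = 0$ from Assumption~\ref{Assump:EssSpec} is exactly what lets me approximate such a spread-out configuration.
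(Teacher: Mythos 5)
Your proposal is correct and follows essentially the same route as the paper: a Gagliardo--Nirenberg barrier on the level set $\|u'\|_{L^2}=s_*$ (the paper's $\partial A_{\mu,k_0}$), Assumption~\ref{Assump:EssSpec} via the Rayleigh quotient to produce the low-gradient state $w_1$, the $L^2$-invariant concentration $t^{1/2}w(t\,\cdot)$ on a single edge for $w_2$, and the intermediate value theorem for the crossing, with uniformity in $\rho$ obtained by the same monotonicity. The only point you leave implicit, which the paper checks explicitly via the normalized linear interpolation $\gamma_0$, is that $\Gamma\neq\emptyset$.
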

\begin{proof}
The proof is borrowed
from~\cite[Lemma 3.1]{BorthwickChangJeanjeanSoaveApp}.
We provide it for the reader's convenience with some due
adaptations since the framework is different.

For any $\mu, k>0$, denote
\begin{equation*}
A_{\mu,k}:=\{u\in H_{\mu}^1(\cG )\big| \int_{\cG }|u'|^2\,\mathrm{d}x<k\}.
\end{equation*}
First note that, due to Assumption~\ref{Assump:EssSpec},
$A_{\mu,k} \neq \emptyset$
whenever $k>0$
and, since $H_{\mu}^1(\cG )$ is connected,
\[
\partial A_{\mu,k}=\{u\in H_{\mu}^1(\cG )\big| \int_{\cG }|u'|^2\,\mathrm{d}x=k\}
\neq \emptyset.
\]

Now, by the Gagliardo-Nirenberg inequality~\eqref{Eq:GN}, we obtain
for all $u$ in $\partial A_{\mu, k}$
\begin{align*}
E_{\rho}(u,\cG )
&\ge\frac{1}{2}\|u'\|_{L^2(\cG )}^2
- \rho\frac{K_{p,\cG}}{p}\mu^{\frac{p}{4}+\frac{1}{2}}\|u'\|_{L^2(\cG )}^{\frac{p}{2}-1}\\
&\ge \frac{1}{2}k\left(1
- \left(\frac{k}{k_1}\right)^\frac{p-6}{4}\right)
\end{align*}
where
$$k_1:=\left(\frac{p}{2\rho K_{p,\cG}}\right)^\frac{4}{p-6}\mu^{-\frac{p+2}{p-6}}.$$
Let
$$\kappa_\rho(k):=\frac{1}{2}k\left(1
- \left(\frac{k}{k_1}\right)^\frac{p-6}{4}\right)
$$
then for $k_0\in(0, k_1)$, for any $\mu>0$ and $u\in \partial A_{\mu, k_0}$
we have
\begin{equation*}
\inf_{u\in \partial A_{\mu,k_0}}E_{\rho}(u,\cG )
\ge \kappa_\rho(k)>0.
\end{equation*}

Next, observe that for any $u \in A_{\mu,k}$
\begin{align*}
E_{\rho}(u,\cG )\le\frac{1}{2}\int_{\cG }|u'|^2\,\,\mathrm{d}x
\le\frac{1}{2}k.
\end{align*}
Since $A_{k,\mu} \neq \emptyset$ for all $k>0$,
it is possible to choose a $w_1 \in H^1_\mu(\cG)$ such that
\begin{equation}\label{mpw1}
\|w_1'\|_{L^2(\cG)}^2 <k_0 \quad \text{and} \quad E_\rho(w_1, \cG) \le \kappa_\rho(k_0),\,
\forall \rho \in \left[\frac12,1\right].
\end{equation}
Moreover, we also observe that we can identify any edge,
say ${\mathrm e_1}$, with the interval $[-\pi_{1}/2, \pi_{1}/2]$ if it is bounded
or $[-\pi_{1}/2, +\infty)$ if not. It follows that any compactly supported $H^1$
function $w$ on $[-\pi_{1}/2, \pi_{1}/2]$, with mass $\mu$,
can be seen as a function in $H^1_\mu(\cG)$. Defining $w_t(x) := t^{1/2} w(t x)$,
with $t >1$, we have $w_t \in H^1_\mu(\cG)$
and that
\begin{align*}
E_\rho(w_t, \cG) &= \frac{t^2}{2} \int_{{\mathrm e_1}} |w'|^2\,\,\mathrm{d}x- \frac{\rho t^{\frac{p-2}{2}}}{p}\int_{{\mathrm e_1}} |w|^p\,\,\mathrm{d}x\\
&\le  \frac{t^2}2\left( \int_{{\mathrm e_1}} |w'|^2\,\,\mathrm{d}x - \frac{2\rho t^{\frac{p-6}{2}}}{p}\int_{{\mathrm e_1}} |w|^p\,\,\mathrm{d}x\right)\\
\end{align*}
for every $\rho \in [\frac{1}{2}, 1]$.
Since $p>6$, then the right-hand side tends to $-\infty$ as $t \to +\infty$, and in particular there exists $t_2>0$ large enough such that $\|w'_t\|^2_{L^2(\cG )}=t^2 \|w'\|_{L^2(\cG)}^2>2k_0$ and $E_{\rho}(w_{t},\cG )<0$ for all $t> t_2$ and $\rho\in[\frac{1}{2}, 1]$. We set $w_2:=w_{2t_2}$, and we point out that
\begin{equation}\label{mpw2}
\|w_2'\|_{L^2(\cG)}^2 >2k_0 \quad \text{and} \quad E_\rho(w_2, \cG) <0\quad \forall \rho \in \left[\frac12,1\right].
\end{equation}
At this point the thesis follows easily. Let $\Gamma$ and $c_\rho$ be defined as in the statement of the lemma for our choice of $w_1$ and $w_2$; the fact that $\Gamma \neq 0$ is straightforward, since
\[
\gamma_0(t):=\frac{\mu^{1/2}}{\| (1-t) w_1 + t w_2\|_{L^2(\cG)}}  \left[(1-t) w_1 + t w_2\right] \qquad t \in [0,1]
\]
belongs to $\Gamma$. By \eqref{mpw1} and \eqref{mpw2}, we note that for every $\gamma \in \Gamma$ there exists $t_\gamma \in [0,1]$ such that $\gamma(t_\gamma) \in \partial A_{\mu, k_0}$, by continuity. Therefore, for any $\rho \in [1/2,1]$, we have for every $\gamma \in \Gamma$
\[
\max_{t \in [0,1]} E_\rho(\gamma(t),\cG) \ge E_\rho(\gamma(t_\gamma),\cG) \ge \inf_{u \in \partial A_{\mu, k_0}} E_\rho(u, \cG) \geq  \kappa_\rho(k_0)
\]
and thus $c_\rho \ge \kappa_\rho(k_0)$  while
\[
\max\{E_\rho(w_1,\cG), E_\rho(w_2,\cG)\} = E_\rho(w_1,\cG) < \frac{\alpha}2.
\]
To conclude we remark that
$\kappa_\rho(k_0)\geq \kappa_1(k_0)$ for $\rho \in [1/2,1]$. We set $\kappa:=\kappa_1(k_0)$.
\end{proof}

\begin{remark}\label{Rem:EssSpec}
If Assumption~\ref{Assump:EssSpec} is not satisfied then Lemma~\ref{MP-geometry} still holds if $\mu>0$ is small enough for $E(u,\cG)-\sigma \|u\|_2^2$ where
$\sigma:=\inf\sigma(-\Delta_\cG)$.
\end{remark}

\paragraph{Properties of the essential spectrum.}
\begin{lemma}\label{L-eigenvalue}
For any $u\in H^1(\cG)$,
for any $\lambda <0$,
there exists an infinite dimensional subspace $L$ of $H^1(\cG )$
such that
\begin{equation*}
\int_{\cG} \left[ |w'|^2 + \left( \lambda - (p-1)\rho |u|^{p-2}\right) w^2\right]  \, \,\mathrm{d}x
<0, \forall \, w \in L.
\end{equation*}
\end{lemma}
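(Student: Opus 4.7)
The plan is to discard the nonlinear weight and reduce the claim to a purely spectral statement about the Kirchoff laplacian $-\Delta_\cG$. Since $\lambda < 0$, $\rho \geq 1/2 > 0$, and $|u|^{p-2} \geq 0$, one has the pointwise bound
\[
|w'|^2 + \bigl(\lambda - (p-1)\rho |u|^{p-2}\bigr) w^2 \leq |w'|^2 + \lambda w^2,
\]
so it suffices to exhibit an infinite-dimensional subspace $L \subset H^1(\cG)$ on which $\int_\cG |w'|^2\,\mathrm{d}x < -\lambda \int_\cG w^2\,\mathrm{d}x$; equivalently, to produce infinitely many approximate null modes for $-\Delta_\cG$.

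The first step is to argue that $0$ lies in the essential spectrum of $-\Delta_\cG$. Assumption~\ref{Assump:EssSpec} gives $0 \in \sigma(-\Delta_\cG)$. A function in the kernel of $-\Delta_\cG$ must be constant on each edge, continuous, and (because $\cG$ is connected) therefore globally constant; but no nonzero constant lies in $L^2(\cG)$ since $\cG$ is noncompact. Thus $0$ is not an eigenvalue, and being in the closed spectrum it must belong to $\sigma_{\mathrm{ess}}(-\Delta_\cG)$.

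The second step uses the spectral theorem. Pick $\varepsilon \in (0, -\lambda)$, say $\varepsilon := -\lambda/2$, and let $L$ be the range of the spectral projection of $-\Delta_\cG$ associated with the interval $[0, \varepsilon]$. By the standard characterization of the essential spectrum in terms of spectral projections, $\dim L = +\infty$. Since $[0,\varepsilon]$ is bounded, $L$ lies in the domain of $-\Delta_\cG$, hence in $H^1(\cG)$, and the spectral calculus yields, for every $w \in L \setminus \{0\}$,
\[
\int_\cG |w'|^2\,\mathrm{d}x = \bigl((-\Delta_\cG) w, w\bigr)_{L^2(\cG)} \leq \varepsilon \|w\|_{L^2(\cG)}^2 < -\lambda \|w\|_{L^2(\cG)}^2.
\]
Combined with the pointwise bound above, this produces the strict inequality claimed in the lemma.

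I do not foresee a serious obstacle: the nonlinear term is absorbed by its nonnegativity, so that \emph{all} the content reduces to the fact that $0$ sits at the bottom of the essential spectrum. The only mildly delicate point is confirming $0 \in \sigma_{\mathrm{ess}}(-\Delta_\cG)$ rather than only $0 \in \sigma(-\Delta_\cG)$, which is handled by the non-$L^2$-integrability of nonzero constants on a noncompact connected graph.
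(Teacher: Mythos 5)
Your proof is correct in substance but takes a genuinely different, and in fact lighter, route than the paper. The paper keeps the potential term: it observes that $V_u : w \mapsto (p-1)\rho|u|^{p-2}w$ is a relatively compact perturbation of $-\Delta_\cG$, invokes Weyl's theorem to get $\inf\sigma_{\mathrm{ess}}(-\Delta_\cG - V_u)=0$, and then extracts the infinite-dimensional negative subspace from the spectral theory of the \emph{perturbed} operator. You instead exploit the sign of the potential ($-(p-1)\rho|u|^{p-2}w^2\leq 0$) to discard it outright and reduce everything to the free Kirchoff laplacian. This buys you something real: you never have to justify the compactness of $V_u$ (which on a graph with infinitely many edges requires knowing that $u\in H^1(\cG)$ decays appropriately at infinity, a point the paper glosses over), and the rest is textbook spectral calculus. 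What both arguments ultimately rest on is the same fact, namely $0\in\sigma_{\mathrm{ess}}(-\Delta_\cG)$, equivalently that the spectral projections $E_{-\Delta_\cG}([0,\varepsilon])$ have infinite rank for every $\varepsilon>0$; the paper simply asserts $\inf\sigma_{\mathrm{ess}}(-\Delta_\cG)=0$, while you attempt to derive it from Assumption~\ref{Assump:EssSpec}.

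One caveat on that derivation: your step ``no nonzero constant lies in $L^2(\cG)$ since $\cG$ is noncompact'' is not literally true. A noncompact connected metric graph can have finite total length (e.g.\ an infinite path with edge lengths $2^{-n}$), in which case constants are square-integrable, $0$ is an eigenvalue, and --- if the $H^1\hookrightarrow L^2$ embedding is compact --- the essential spectrum may even be empty, so the lemma itself fails. This is not a defect specific to your argument (the paper's unproved assertion $\inf\sigma_{\mathrm{ess}}(-\Delta_\cG)=0$ has exactly the same blind spot), but if you want your reduction to be airtight you should either assume $\sum_{\mathrm{e}}|\mathrm{e}|=+\infty$, as the surrounding discussion in Section~\ref{Sec:EssSpec} implicitly does, or take $\inf\sigma_{\mathrm{ess}}(-\Delta_\cG)=0$ as the standing hypothesis.
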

\begin{proof}
Since $\inf\sigma_{\rm ess}(-\Delta_\cG)=0$ and $ V_u : w\mapsto (p-1)\rho|u|^{p-2}w$ is compact from
$H^2(\cG)$ to $L^2(\cG)$
Weyl's criterion gives $0=\inf\sigma_{\rm ess}(-\Delta_\cG-V_u)$ and so
for any $\lambda<0$ there exists an infinite dimensional subspace $L$ of $H^1(\cG)$ such that
\[
 \forall w\in L,\, \int_{\cG} \left[ |w'|^2 + \left( \lambda - |u|^{p-2}\right) w^2\right]  \, \,\mathrm{d}x
<0
\]
which concludes the proof.
\end{proof}

\paragraph{An abstract compactness lemma.}
\begin{lemma}\label{Lem:Abstract}
Let
$\{u_{n}\} \subset H_\mu^1(\cG)$
be a bounded nonnegative sequence.
Let $(\lambda_n)_{n\in\N}\in \R$ and $(\rho_n)_{n\in\N}\in \R$ be convergent
to $\lambda_\infty$ and $\rho_\infty$ respectively
and such that
\begin{equation}\label{Eq:SequencesEnergy}
E_{\rho_n}(u_n)\to c>0\quad\mbox{ and }\quad E'_{\rho_n}(u_n) +  \lambda_n u_n \to 0 \quad \mbox{in the dual of} \quad H^1(\cG )
\end{equation}
and if, for each integer $n$, the inequality
\begin{align}\label{Eq:NegativeSubspace}
E''_{\rho_n}(u_n)[\varphi, \varphi] + \lambda_n ||\varphi||_{L^2(\cG )}^2 < 0
\end{align}
holds for any $\varphi \in W_n \setminus \{0\}$, $W_n$ a subspace of $T_{u_n} S_\mu$ then the dimension of $W_n$ is at most $1$.
Then $\{u_{n}\}$ has a converging subsequence
in $H^1(\cG)$
to some positive $u_\infty$
such that
\begin{equation}\label{Eq:NLSinfty}
- u_{\infty}'' + \lambda_{\infty} u_{\infty}=\rho_\infty u_{\infty}^{p-1}
\end{equation}
with the Kirchhoff condition \eqref{Eq:Kirchhoff} at the vertices.
Moreover, we have $m_0(u_\infty)\leq 1$.
\end{lemma}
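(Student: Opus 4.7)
The strategy is to pass to a weak limit, identify the equation it satisfies, show the Lagrange multiplier is strictly positive (this is where the two Assumptions and the Morse index bound all enter), and then upgrade weak to strong convergence. First, boundedness of $\{u_n\}$ in $H^1(\cG)$ yields a subsequence with $u_n\rightharpoonup u_\infty$ weakly in $H^1$, strongly in $L^q(K)$ for every $q\in[2,\infty)$ and every compact subgraph $K\subset\cG$, and pointwise a.e.; in particular $u_\infty\ge 0$. Testing~\eqref{Eq:SequencesEnergy} against arbitrary compactly supported $\varphi\in H^1(\cG)$ and passing to the limit (the nonlinear term converges by local Rellich compactness together with the $L^\infty$-bound from~\eqref{Eq:GN}) shows that $u_\infty$ solves~\eqref{Eq:NLSinfty} weakly; standard edgewise elliptic regularity then places $u_\infty$ in $H^2_{\mathrm{loc}}$ and yields the Kirchhoff conditions at every vertex.

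Next I would argue $\lambda_\infty>0$ in two steps. If $\lambda_\infty<0$, then $\lambda_n<0$ for $n$ large, and Lemma~\ref{L-eigenvalue} applied to $u=u_n$, $\lambda=\lambda_n$, $\rho=\rho_n$ supplies an infinite-dimensional subspace $L_n\subset H^1(\cG)$ on which $E''_{\rho_n}(u_n)[w,w]+\lambda_n\|w\|_{L^2(\cG)}^2<0$; intersecting $L_n$ with the codimension-one subspace $T_{u_n}S_\mu$ preserves infinite dimensionality and violates the hypothesis~\eqref{Eq:NegativeSubspace}. If $\lambda_\infty=0$, then $u_\infty$ satisfies $-u_\infty''=\rho_\infty u_\infty^{p-1}$ with the Kirchhoff conditions, and the rescaling $\tilde u_\infty:=\rho_\infty^{1/(p-2)}u_\infty$ produces a nonnegative $H^1$-solution of~\eqref{Eq:Kirchhoff} at $\lambda=0$; Assumption~\ref{Assump:NoZeroPostiveZerostate} then forces $\tilde u_\infty\equiv 0$, hence $u_\infty\equiv 0$. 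On the other hand, pairing $E'_{\rho_n}(u_n)+\lambda_n u_n\to 0$ with $u_n$ and combining with $E_{\rho_n}(u_n)\to c$ gives $\int_\cG|u_n|^p\,\mathrm{d}x\to \tfrac{2pc}{\rho_\infty(p-2)}>0$. To reach a contradiction I would invoke a Lions-type vanishing lemma on $\cG$: either $u_n\to 0$ in $L^p(\cG)$ (which is already contradicted), or there exist $R,\delta>0$ and points $x_n\in\cG$ with $\int_{B(x_n,R)}u_n^2\ge\delta$; if $\{x_n\}$ stays bounded, local compactness contradicts $u_\infty\equiv0$, while if $x_n$ escapes to infinity it must eventually lie on an unbounded edge, and translation along that edge extracts a nontrivial nonnegative $H^1$ solution of $-v''=\rho_\infty v^{p-1}$ on $[0,+\infty)$, which is impossible by the concavity argument used in Section~\ref{Sec:NoZeroPostiveZerostate}. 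Ensuring this tightness on a general noncompact graph is where I expect the main technical difficulty to lie.

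With $\lambda_\infty>0$ and $u_\infty\not\equiv 0$ in hand, the bilinear form $(v,w)\mapsto\int_\cG v'w'+\lambda_\infty\int_\cG vw$ is equivalent to the $H^1$ inner product. Subtracting the limiting equation from the approximate one and testing against $u_n-u_\infty$ yields
\[
\int_\cG|(u_n-u_\infty)'|^2\,\mathrm{d}x+\lambda_\infty\int_\cG(u_n-u_\infty)^2\,\mathrm{d}x=\rho_n\int_\cG(u_n^{p-1}-u_\infty^{p-1})(u_n-u_\infty)\,\mathrm{d}x+o(1),
\]
whose right-hand side vanishes by the tightness/$L^2$-convergence obtained above combined with the uniform $L^\infty$-bound from~\eqref{Eq:GN}; hence $u_n\to u_\infty$ strongly in $H^1(\cG)$. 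Strict positivity $u_\infty>0$ then follows from the strong maximum principle applied edge by edge together with the Kirchhoff conditions, as a zero at an interior point would propagate through the connected graph. Finally, the bound $m_0(u_\infty)\le 1$ results from upper semicontinuity of the Morse index under strong convergence: a two-dimensional subspace of $T_{u_\infty}S_\mu$ on which $D^2 E_{\rho_\infty}(u_\infty)$ is non-positive would, by a small perturbation, yield a two-dimensional strictly negative subspace in $T_{u_n}S_\mu$ for $n$ large, contradicting~\eqref{Eq:NegativeSubspace}.
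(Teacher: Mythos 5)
Your overall skeleton coincides with the paper's: the same subsequence extraction and identification of the limit equation, the same use of Lemma~\ref{L-eigenvalue} intersected with the codimension-one tangent space to force $\lambda_\infty\ge 0$, the appeal to Assumption~\ref{Assump:NoZeroPostiveZerostate} to exclude $\lambda_\infty=0$, the ODE-uniqueness/Kirchhoff argument for strict positivity, and the perturbation argument for upper semicontinuity of the Morse index. The one place where you genuinely depart from the paper is the non-vanishing step, and that is where your argument breaks. The paper does not use a Lions-type dichotomy at all: it subtracts the two weak formulations tested against $u_n-u_\infty$ to obtain \eqref{Eq:Difference}, and then rules out $u_\infty\equiv 0$ directly, since \eqref{Eq:Contradiction} is incompatible with $\|u_n\|_{L^2(\cG)}^2=\mu$ when $\lambda_\infty>0$ and with $c>0$ (via Gagliardo--Nirenberg applied to the energy) when $\lambda_\infty=0$. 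Only afterwards, once $u_\infty>0$ is known, is Assumption~\ref{Assump:NoZeroPostiveZerostate} invoked to upgrade $\lambda_\infty\ge 0$ to $\lambda_\infty>0$.

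The concrete gap in your route: when the concentration points $x_n$ escape to infinity you claim they ``must eventually lie on an unbounded edge'' and then translate along that half-line to extract a limiting profile. A noncompact connected metric graph need not have any unbounded edge --- the $\Z$-periodic graphs with bounded fundamental domain, which are precisely among the examples the paper treats in Section~\ref{Sec:NoZeroPostiveZerostate}, have all edges of finite length --- so there is no half-line to translate along, and extracting a profile ``at infinity'' would require bounded geometry or a group action that the theorem deliberately does not assume. You have correctly identified that the convergence of the nonlinear term $\int(\rho_n u_n^{p-1}-\rho_\infty u_\infty^{p-1})(u_n-u_\infty)$ is the crux (the paper itself passes over this point rather quickly when asserting \eqref{Eq:Difference}), but your proposed repair does not close it for the class of graphs covered by the statement. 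You also leave $u_\infty\not\equiv 0$ unestablished in the case $\lambda_\infty>0$ (``with \dots $u_\infty\not\equiv 0$ in hand''); the paper obtains this from \eqref{Eq:Contradiction} against $\mu>0$.
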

\begin{proof}
Up to the extraction of subsequences, we can assume $(\lambda_n)$ and $(\rho_n)$ to be
convergent to $\lambda_\infty$ and $\rho_\infty$, respectively, and
\begin{align}
u_{n}\rightharpoonup u_{\infty} & \mbox{ in}~~ H^1(\cG ),\label{Eq:WeakH1}\\
u_n\to u_{\infty}& \mbox{ in}~~L_{{\rm{loc}}}^r(\cG ), r>2,\label{Eq:LocLp}\\
u_{n}(x)\to u_{\infty}(x)& \mbox{ for a.e.}~ x\in \cG ,\label{Eq:Point}
\end{align}
which implies that $u_{\infty}\ge0$ and $u_{\infty}$ satisfies
\eqref{Eq:NLSinfty}
(with the Kirchhoff condition \eqref{Eq:Kirchhoff} at the vertices).

Let us show that $u_{\infty} \not \equiv 0$. First let us prove that $\lambda_{\infty}\ge0$. Since the codimension of $T_{u_n} S_\mu$ is $1$, we infer that if the inequality \eqref{Eq:NegativeSubspace} holds for every $\varphi \in V_n \setminus \{0\}$ for a subspace $V_n$ of $H^1(\cG)$, then the dimension of $V_n$ is at most $2$.
If $\lambda_n<0$, Lemma~\ref{L-eigenvalue} provides
for $n \in \mathbb{N}$ large,
the existence of a subspace $V_n$ of $H^1(\cG)$ with $\dim V_n \geq 3$ and $a_n>0$ such that,
\[
E''_{\rho_n}(u_n)[\varphi,\varphi] + \lambda_n |\varphi|^2
\le -a_n \|\varphi\|^2,  \qquad \forall  \, \varphi \in V_n \setminus\{0\}.
\]
Therefore, for $n \in \mathbb{N}$ large,
$\lambda_n \ge 0$. Hence $\lambda_\infty\geq 0$

Now from \eqref{Eq:SequencesEnergy} and the fact that $\lambda_n \to \lambda_\infty$, we deduce that
\[
\int_{\cG} \left(u_n' \varphi'+\lambda_n  u_n \varphi\right) \,\,\mathrm{d}x - \rho_n \int_{\cG } u_n^{p-1} \varphi\,\,\mathrm{d}x =o(1) \|\varphi\|_{H^1(\cG)}.
\]
Moreover, by \eqref{Eq:NLSinfty},
\[
\int_{\cG} \left(u_\infty' \varphi'+\lambda_\infty u_\infty \varphi\right) \,\,\mathrm{d}x - \rho_\infty \int_{\cG } u_\infty^{p-1} \varphi\,\,\mathrm{d}x =0
\]
Therefore, taking the difference,
and $\varphi=u_n-u_\infty$,
we infer that
\begin{equation}\label{Eq:Difference}
\int_{\cG }|(u_n-u_\infty)'|^2\, \,\mathrm{d}x +\lambda_{\infty} \int_{\cG }| u_n-u_\infty|^2 \,\,\mathrm{d}x \to 0
\end{equation}
as $n \to \infty$. If we assume that $u_{\infty}  \equiv 0$ then we deduce that
\begin{equation}\label{Eq:Contradiction}
\int_{\cG } |u_n'|^2 \,\,\mathrm{d}x + \lambda_{\infty} \int_{\cG } |u_n|^2 \,\,\mathrm{d}x   \to 0.
\end{equation}
If $\lambda_{\infty} >0$, this is not possible since $\|u_n\|_{L^2(\cG)}^2=\mu>0$. If $\lambda_{\infty} =0$, then
\eqref{Eq:Contradiction} contradicts
the assumption $c>0$ in \eqref{Eq:SequencesEnergy}.
Hence, $u_\infty \not \equiv 0$.

Appealing to the Kirchhoff condition~\eqref{Eq:Kirchhoff} and the uniqueness
in Cauchy-Lipschitz theorem, we have in fact that $u_\infty >0$ in $\cG$. Indeed, assume by contradiction that there exists $x_0\in \cG $ such that $u_\infty(x_0)=0$. If $x_0$ stays in the interior of some edge, then by $u_\infty\ge0$ on $\cG $ it follows that $u'_\infty(x_0)=0$ leading to $u_\infty\equiv 0$ on the whole edge. If instead $x_0$ is a vertex , then by $u_\infty(x)\ge0$, $u'_\infty(x_0)_{\rm e}\geq 0$ for all ${\rm e}\succ x_0$ and by the Kirchhoff condition we also get $u'_\infty(x_0)=0$. Hence, by uniqueness, $(u_\infty)_{\rm e}\equiv 0$, so $u_\infty\equiv 0$ on any edge containing $x_0$; but then, by repeating this argument from first neighbors to next ones and so
iteratively
(since $\cG$
is connected)
we deduce that $u_\infty \equiv 0$ on $\cG$, which is the desired contradiction.

Note that \eqref{Eq:Difference} implies $u_n\to u_\infty$ in $\dot{H}^1(\cG)$ and even
$H^1(\cG)$ if $\lambda_\infty>0$.
Now we claim that $\lambda_\infty>0$.
If $\lambda_\infty=0$, then
from  \eqref{Eq:NLSinfty} contradicts Assumption~\ref{Assump:NoZeroPostiveZerostate}
since $u_\infty\in H^1(\cG)$ and $u_\infty>0$.
So $\lambda_{\infty} >0$ and $u_n \to u_{\infty}$ strongly in $H^1(\cG )$.

It remains to show that the Morse index $m(u_\infty)$ is at most $1$.
If not, in view of Definition~\ref{def: app morse} we may assume by contradiction that there exists a $W_0 \subset T_{u}S_{\mu}$ with $\dim W_0 =2$ such that
\begin{equation}\label{10-24-1}
D^2 E_{\rho_\infty}(u_\infty)[w,w]<0,\,\forall  w \in W_0 \setminus\{0\}.
\end{equation}
Then, since $W_0$ is of finite dimension, there exists $\beta>0$ such that
$$D^2 E_{\rho_\infty}(u_\infty)[w,w]<-\beta,\,\forall  w \in W_0 \setminus\{0\}~~\mbox{with}~~\|w\|_{H^1(\cG )}=1,$$
using the homogeneity of $D^2 E_{\rho_\infty}(u_\infty)$, we deduce that
\begin{equation*}
D^2 E_{\rho_\infty}(u_\infty)[w,w]< -\beta||w||_{H^1(\cG )}^2,\, \forall  w \in W_0 \setminus\{0\}.
\end{equation*}
Now, from \cite[Corollary 1]{BCJS-2022} or using directly that $E_\rho'$ and $E_\rho''$ are $\alpha$-H\"older continuous on bounded sets for some $\alpha \in (0,1]$,
and Sobolev inequality
it follows that there exists $\delta_1>0$ small enough such that, for any $v\in S_{\mu}$ satisfying $||v-u_\infty|| \leq \delta_1$
and $|\rho-\rho_\infty|<\delta_1$,
\begin{equation}\label{L-conditionD2step1}
D^2E_{\rho}(v)[w,w]<-\frac{\beta}{2}||w||_{H^1(\cG )}^2,\,\forall  w \in W_0 \setminus\{0\}.
\end{equation}
Hence, noting that $||u_n-u_\infty||_{H^1(\cG )}\leq\delta_1$ and $|\rho_n-\rho_\infty|<\delta_1$ for $n\in \mathbb{N}$ large enough, we get
\begin{equation}\label{10-24-2}
D^2E_{\rho}(u_n)[w,w]<-\frac{\beta}{2}||w||_{H^1(\cG )}^2,\,\forall  w \in W_0 \setminus\{0\}
\end{equation}
for any such large $n$. Therefore, since $\dim W_0>1$  this contradicts \eqref{Eq:NegativeSubspace}.
Thus we infer that $m_0(u_\infty) \le 1$.
\end{proof}

\section{Main statements}

\begin{proposition}\label{Prop:NLSrho}
For any fixed $\mu>0$ and almost every $\rho \in\left[\frac12,1\right]$,
there exists $(u_\rho, \lambda_{\rho})\in H^1_\mu(\cG)\times \mathbb{R}^+$ which solves
\begin{equation}\label{pb rho}
\begin{cases}
-u_\rho'' + \lambda_\rho u_\rho = \rho
u_\rho^{p-1}, \quad u_\rho>0 & \text{in $\mathcal{\cG}$}, \\
\sum_{{\rm e} \succ {\rm v}} u_\rho'({\rm v}) = 0 &  \text{for any vertex ${\rm v}$}.
\end{cases}
\end{equation}
Moreover, $E_{\rho}(u_{\rho},\cG)=c_{\rho}$ and
$m(u_\rho) \le 1$.
\end{proposition}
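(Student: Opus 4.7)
The plan is to apply Theorem~\ref{Thm:mp_geom} to the family $\Phi_\rho := E_\rho(\cdot,\cG) = A - \rho B$ on $E := H^1(\cG)$ and $H := L^2(\cG)$, with $A(u) := \tfrac{1}{2}\|u'\|_{L^2(\cG)}^2$ and $B(u) := \tfrac{1}{p}\|u\|_{L^p(\cG)}^p \geq 0$, and then to pass to the limit in the resulting bounded Palais--Smale sequence via Lemma~\ref{Lem:Abstract}.

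The first step is to verify the hypotheses of Theorem~\ref{Thm:mp_geom}. The coercivity condition~\eqref{hp coer} is checked on the constraint $S_\mu$, where $\|u\|_{H^1(\cG)}^2 = \mu + 2A(u)$, so $\|u\|\to\infty$ on $S_\mu$ forces $A(u)\to\infty$. The $\alpha$-Hölder continuity of $\Phi_\rho'$ and $\Phi_\rho''$ on bounded subsets of $H^1(\cG)$ reduces, since $A$ is quadratic, to Hölder control of the Nemytskii map $u\mapsto |u|^{p-2}u$ and its linearisation; this follows from elementary bounds for $x\mapsto |x|^{p-2}x$ combined with the embedding $H^1(\cG)\hookrightarrow L^\infty(\cG)$ supplied by~\eqref{Eq:GN}, with some exponent $\alpha \in (0,1]$, uniformly in $\rho\in[\tfrac12,1]$. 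The uniform mountain-pass geometry is exactly Lemma~\ref{MP-geometry}.

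Theorem~\ref{Thm:mp_geom} then provides, for almost every $\rho\in[\tfrac12,1]$, a sequence $\{v_n\}\subset H_\mu^1(\cG)$ and $\zeta_n\to 0^+$ satisfying its four conclusions. I would next replace $v_n$ by $u_n := |v_n|\in H^1_\mu(\cG)$, which is non-negative with the same $L^2$-mass, the same $H^1$-norm and the same energy as $v_n$, and inherits (with some care) the approximate-critical-point and Morse-index properties. By Remark~\ref{Rem:WeakCV}, the almost Lagrange multipliers $\lambda_n := -\tfrac{1}{\mu}E_\rho'(u_n)\cdot u_n$ are bounded, since $\{u_n\}$ is bounded in $H^1(\cG)$, so on a subsequence $\lambda_n\to\lambda_\infty\in\R$; Remark~\ref{Rem:LouisProperty} re-expresses conclusion~(\ref{Thm:mp_geom_4}) of Theorem~\ref{Thm:mp_geom} as the hypothesis~\eqref{Eq:NegativeSubspace} of Lemma~\ref{Lem:Abstract}. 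Applying Lemma~\ref{Lem:Abstract} with $\rho_n = \rho$ constant then yields a strong $H^1$-limit $u_\rho > 0$ solving~\eqref{pb rho} with $\lambda_\rho = \lambda_\infty > 0$ and $m(u_\rho)\leq 1$; strong convergence together with the continuity of $E_\rho$ gives $E_\rho(u_\rho,\cG) = c_\rho$.

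The main obstacle I anticipate is the passage from $\{v_n\}$ to $\{u_n\}=\{|v_n|\}$: while the evenness of $E_\rho$ preserves energy, $L^2$-mass and Dirichlet norm, transferring the approximate Morse-index bound from $v_n$ to $|v_n|$ is not automatic, because the second variation $D^2E_\rho(|v_n|)$ does not pointwise coincide with $D^2E_\rho(v_n)$ near $\{v_n = 0\}$, and the Kirchhoff condition at vertices also has to be respected by admissible test directions.
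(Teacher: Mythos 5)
Your proposal follows essentially the same route as the paper: verify the hypotheses of Theorem~\ref{Thm:mp_geom} with $A(u)=\tfrac12\|u'\|_{L^2(\cG)}^2$ and $B(u)=\tfrac1p\|u\|_{L^p(\cG)}^p$, use Lemma~\ref{MP-geometry} for the uniform mountain-pass geometry, and pass to the limit in the bounded Palais--Smale sequence via Lemma~\ref{Lem:Abstract} with $\rho_n=\rho$ constant. The one point where you diverge is the one you yourself flag: you propose to obtain non-negativity by replacing $v_n$ with $|v_n|$ \emph{a posteriori}, and you correctly observe that transferring the approximate Morse-index bound $\tilde m_{\zeta_n}$ from $v_n$ to $|v_n|$ is not automatic, since $E_\rho''(|v_n|)$ and $E_\rho''(v_n)$ need not agree as quadratic forms where $v_n$ changes sign. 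This is a genuine gap in the proposal as written. The paper sidesteps it by not post-processing at all: it invokes the diamagnetic-type refinement of the abstract theorem (Remark~1.4 in \cite{BCJS-2022}), which exploits that $u\mapsto|u|$ is continuous, preserves the constraint and satisfies $E_\rho(u)=E_\rho(|u|)$, so that the minimax construction itself can be arranged to deliver a Palais--Smale sequence $\{u_n\}$ that is already non-negative \emph{and} carries all four conclusions of Theorem~\ref{Thm:mp_geom}, including the bound $\tilde m_{\zeta_n}(u_n)\le 1$. With that substitution your argument closes; the remaining steps (boundedness of $\lambda_n$ from \eqref{lambda}, extraction of a convergent subsequence, application of Lemma~\ref{Lem:Abstract}, and $E_\rho(u_\rho,\cG)=c_\rho$ by strong $H^1$ convergence) match the paper.
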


\begin{proof}
For simplicity, we omit the dependence of the functionals $E_\rho(\cdot\,\cG)$ on $\cG$.
We apply Theorem~\ref{Thm:mp_geom} to the family of functionals $E_\rho$, with $E=H^1(\cG)$, $H=L^2(\cG)$, $S_\mu = H^1_\mu(\cG)$, and
$\Gamma$ defined in Lemma~\ref{MP-geometry}. Setting
\[
A(u) = \frac12\int_{\cG} |u'|^2\,\,\mathrm{d}x \quad \text{and} \quad  B(u) = \frac{\rho}{p}\int_{\cG} |u|^p\,\,\mathrm{d}x,
\]
assumption \eqref{hp coer} holds, since we have that
\[
u \in H^1_\mu(\cG), \ \|u\| \to +\infty \quad \implies \quad A(u) \to +\infty.
\]
Let $E'_{\rho}$ and $E''_{\rho}$ denote respectively the free first and second derivatives of $E_{\rho}$. Clearly, $E'_{\rho}$ and $E''_{\rho}$ are both of class $C^1$, and hence locally H\"older continuous, on $H_\mu^1(\cG)$, which implies that Assumption \eqref{Holder} holds.

Thus, taking into account Lemma~\ref{MP-geometry}, by Theorem~\ref{Thm:mp_geom},
for almost every $\rho \in [1/2,1]$, there exist a bounded sequence $\{u_{n, \rho}\} \subset H_\mu^1(\cG)$, that we shall denote simply by $\{u_n\}$ from now on, and a sequence $\{\zeta_n\}\subset \mathbb{R}^+$ with $\zeta_n\to 0^+$, such that
\begin{equation}\label{const crit}
E'_{\rho}(u_n) +  \lambda_n u_n \to 0 \quad \mbox{in the dual of} \quad H^1_{\mu}(\cG ),
\end{equation}
where
\begin{equation}\label{lambda}
\lambda_n:= -\frac{1}{\mu} E'_\rho(u_n) u_n
\end{equation}
and if the inequality
\begin{align}\label{L-Hess crit}
E''_\rho(u_n)[\varphi, \varphi] + \lambda_n ||\varphi||_{L^2(\cG )}^2 < -\zeta_n \|\varphi\|_{H^1(\cG)}^2
\end{align}
holds for any $\varphi \in W_n \setminus \{0\}$ in a subspace $W_n$ of $T_{u_n} S_\mu$, then
${\rm dim}W_n \leq 1$.
In addition,
by diamagnetic inequality (see for example~\cite[Remark 1.4]{BCJS-2022})
since
$u \in H^1_\mu(\cG) \ \Longrightarrow  |u| \in H^1_\mu(\cG)$,
the map $u \mapsto |u|$ is continuous, and
$E_\rho(u) = E_\rho(|u|)$, it is possible to choose $\{u_n\}$ with the property that $u_n \ge 0$ on $\cG$.

The sequence $\{u_n\}$ being bounded,  it follows by (\ref{lambda}) that $\{\lambda_n\}$ is bounded. Then, passing to a subsequence, there exists $\lambda_\rho\in \mathbb{R}$ such that $\lim\limits_{n\to+\infty}\lambda_n=\lambda_{\rho}$.
Taking $\rho_n=\rho$ for all $n\in\N$ in Lemma~\ref{Lem:Abstract}, we conclude the proof.
\end{proof}

We now turn to the case $\rho=1$. Beforehand, let us consider the
\begin{lemma}
For $\mu>0$, for $\rho>0$ and $u\in H^1_\mu(\cG)$, $u>0$ such that
\begin{equation}
\begin{cases}
-u'' + \lambda u = \rho
u^{p-1}, & \text{in $\mathcal{\cG}$}, \\
\sum_{{\rm e} \succ {\rm v}} u'({\rm v}) = 0 &  \text{for any vertex ${\rm v}$},
\end{cases}
\end{equation}
define
$$L(u,\cG):=\frac12\int_\cG (u')^2\,\mathrm{d}x+\frac\rho{p}\int_\cG u^p\,\mathrm{d}x-\frac{\lambda}2\int_\cG u^2\,\mathrm{d}x$$
then
\[
 \left(\frac12-\frac1p\right)L(u,\cG)
 = \left(\frac12+\frac1p\right)E(u,\cG)+
\frac{3\lambda}2\left(\frac1p-\frac16\right)\mu.
 \]
\end{lemma}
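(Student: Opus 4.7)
The plan is to derive the identity from a single Nehari-type relation together with elementary algebra. (Throughout, I interpret $E(u,\cG)$ in the statement as $E_\rho(u,\cG)$, consistent with the $\rho$ appearing in the definition of $L$.) First, I would multiply the equation on each edge $\mathrm{e}$ by $u_{\upharpoonright\mathrm{e}}$ and integrate; elliptic regularity (since $u\in H^1(\cG)$ solves $-u''=\rho u^{p-1}-\lambda u\in L^2$ on each edge, hence $u_{\upharpoonright\mathrm{e}}\in H^2(\mathrm{e})$) makes this integration by parts legitimate. Summing over edges and regrouping the resulting boundary terms vertex-by-vertex, the boundary contribution becomes
\[
-\sum_{\mathrm{v}\in\mathcal{V}} u(\mathrm{v})\sum_{\mathrm{e}\succ\mathrm{v}} u_{\upharpoonright\mathrm{e}}'(\mathrm{v}),
\]
which vanishes identically by the Kirchhoff condition~\eqref{Eq:Kirchoff}. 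This yields the Nehari-type identity
\[
\int_{\cG}(u')^2\,\mathrm{d}x+\lambda\mu=\rho\int_{\cG} u^p\,\mathrm{d}x.
\]

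Next, the claim becomes a purely algebraic verification. I would set $T:=\tfrac12\int_{\cG}(u')^2\,\mathrm{d}x$ and $N:=\tfrac1p\int_{\cG} u^p\,\mathrm{d}x$, so that $E_\rho(u,\cG)=T-\rho N$ and $L(u,\cG)=T+\rho N-\tfrac{\lambda\mu}{2}$, while the Nehari relation reads $2T+\lambda\mu=p\rho N$. Forming $\bigl(\tfrac12-\tfrac1p\bigr)L(u,\cG)-\bigl(\tfrac12+\tfrac1p\bigr)E_\rho(u,\cG)$, the coefficient of $T$ is $-2/p$ and that of $\rho N$ is $+1$; substituting $\rho N=(2T+\lambda\mu)/p$ cancels the $T$ contributions, and collecting the remaining $\lambda\mu$ terms produces exactly $\tfrac{3\lambda}{2}\bigl(\tfrac1p-\tfrac16\bigr)\mu$, as required.

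The only delicate point — really a matter of bookkeeping — is the vertex-by-vertex organization of boundary terms when integrating by parts on the graph, since this is what allows the Kirchhoff condition to eliminate all boundary contributions at once. Once that step is in place, the rest of the argument is short and mechanical, reducing to the substitution of the Nehari relation into a linear combination of $L$ and $E_\rho$.
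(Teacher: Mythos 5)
Your proof is correct and follows essentially the same route as the paper: both derive the Nehari-type relation $\int_\cG (u')^2\,\mathrm{d}x+\lambda\mu=\rho\int_\cG u^p\,\mathrm{d}x$ by multiplying the equation by $u$, integrating by parts, and using the Kirchhoff condition to kill the vertex terms, and then conclude by linear algebra on $E_\rho$ and $L$ (your reading of $E(u,\cG)$ as $E_\rho(u,\cG)$ matches the paper's implicit usage). Your treatment is, if anything, slightly more careful than the paper's about the $H^2$ regularity on each edge and the vertex-by-vertex cancellation of boundary terms.
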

\begin{proof}
Note that for any ${\rm e}\in {\mathcal E}$, $u_{\upharpoonright\rm e}\in C^2({\rm e})$ and
multiplying $-u'' + \lambda u_\rho - \rho u^{p-1}$ by $u$ and integrating on $\cG$ provides the Pohozaev identity
\[
 \int_\cG (u')^2 \,\mathrm{d}x+ \lambda \int_\cG u^2 \,\mathrm{d}x- \rho \int_\cG  u^p\,\mathrm{d}x=0
\]
so that
\[
E(u,\cG)= \left(\frac12-\frac1p\right)\int_\cG (u')^2 \,\mathrm{d}x-\frac\lambda{p}\int_\cG u^2 \,\mathrm{d}x
\]
and
\[
L(u,\cG):=\left(\frac12+\frac1p\right)\int_\cG (u')^2\,\mathrm{d}x-\lambda\left(\frac12-\frac1p\right)\int_\cG u^2\,\mathrm{d}x
\]
so that
\begin{align*}
 \left(\frac12+\frac1p\right)E(u,\cG)+
\left(\frac12+\frac1p\right)\frac\lambda{p}\int_\cG u^2 \,\mathrm{d}x
 &=
 \left(\frac12-\frac1p\right)L(u,\cG)\\
 &\quad +
 \left(\frac12-\frac1p\right)
\lambda\left(\frac12-\frac1p\right)\int_\cG u^2\,\mathrm{d}x
\end{align*}
which concludes the proof.
\end{proof}
\begin{corollary}\label{Cor:BoundHamiltonian}
For $\mu>0$, for $\rho>0$ and $u\in H^1_\mu(\cG)$, $u>0$ such that
\begin{equation}
\begin{cases}
-u'' + \lambda u = \rho
u^{p-1}, & \text{in $\mathcal{\cG}$}, \\
\sum_{{\rm e} \succ {\rm v}} u'({\rm v}) = 0 &  \text{for any vertex ${\rm v}$},
\end{cases}
\end{equation}
define for any ${\rm e}\in {\mathcal E}$
$$\ell_{\lambda, \rho}(u,{\rm e}):=\frac12(u')^2+\frac\rho{p}u^p-\frac{\lambda}2u^2,$$
which is constant on ${\rm e}$,
then
\[
 \sum_{{\rm e}\in {\mathcal E},\,\ell_{\lambda, \rho}(u,{\rm e})<0}|{\rm e}||\ell_{\lambda, \rho}(u,{\rm e})|
 \geq -\left(\frac12+\frac1p\right)E(u,\cG)+
\frac{3\lambda}2\left(\frac16-\frac1p\right)\mu.
 \]
\end{corollary}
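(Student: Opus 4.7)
The plan is to exploit the fact that $\ell_{\lambda,\rho}(u,\cdot)$ is the classical first integral of the one-dimensional ODE $-u''+\lambda u=\rho u^{p-1}$ on each edge, hence constant there, and thus rewrite the quantity $L(u,\cG)$ from the preceding lemma as a weighted edge-sum of these constants. The inequality will then follow by a sign bookkeeping argument together with the identity just established.

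First I would differentiate $\ell_{\lambda,\rho}(u,\cdot)$ along an edge. Using the equation,
\[
\frac{d}{dx}\ell_{\lambda,\rho}(u,\cdot)=u'\bigl(u''+\rho u^{p-1}-\lambda u\bigr)=0,
\]
confirming that $\ell_{\lambda,\rho}(u,{\rm e})$ is a well-defined constant on each edge ${\rm e}$. The required $C^2$-regularity on the interior of each edge is immediate from the strong form of the equation together with the continuity and boundedness of $u$ on each edge coming from $u\in H^1(\cG)$.

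Next, on every unbounded edge (identified with $[0,+\infty)$), the inclusion $u\in H^1$ forces $u(x)\to 0$ at infinity; a bootstrap in the equation gives $u''\in L^2$ on that edge, hence $u\in H^2$ there and $u'(x)\to 0$ as well. Passing to the limit in the constancy relation then forces $\ell_{\lambda,\rho}(u,{\rm e})=0$ on unbounded edges. Setting
\[
S^\pm:=\sum_{\pm\ell_{\lambda,\rho}(u,{\rm e})>0}|{\rm e}|\,\bigl|\ell_{\lambda,\rho}(u,{\rm e})\bigr|,
\]
where the sums are effectively over bounded edges, one obtains
\[
L(u,\cG)=\int_\cG \ell_{\lambda,\rho}(u,\cdot)\,\mathrm{d}x=S^+-S^-.
\]

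Finally, the preceding lemma is equivalent to the identity $-(\tfrac12-\tfrac1p)L(u,\cG)=-(\tfrac12+\tfrac1p)E(u,\cG)+\tfrac{3\lambda}{2}(\tfrac16-\tfrac1p)\mu$, so the inequality to prove reads $S^-\geq -(\tfrac12-\tfrac1p)(S^+-S^-)$, i.e.
\[
\left(\tfrac12+\tfrac1p\right)S^-+\left(\tfrac12-\tfrac1p\right)S^+\geq 0,
\]
which is trivial since $p>2$ and $S^\pm\geq 0$. The only nonroutine point is the vanishing of $\ell_{\lambda,\rho}$ on unbounded edges, which rests on the $H^2$-bootstrap and the standard fact that $H^1$-functions on $[0,+\infty)$ decay to zero at infinity; everything else is algebraic sign-juggling.
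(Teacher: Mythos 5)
Your argument is correct and is essentially the intended one: the paper states the corollary without proof as an immediate consequence of the preceding lemma, via exactly the identification $L(u,\cG)=\sum_{\rm e}|{\rm e}|\,\ell_{\lambda,\rho}(u,{\rm e})$ and the sign bookkeeping you carry out. Your one nonroutine step---that $\ell_{\lambda,\rho}(u,{\rm e})=0$ on unbounded edges because $u\to 0$ and $u'\to 0$ there---is precisely what the paper establishes in point 3 of the Appendix's ``Properties of solutions,'' so the proposal is complete.
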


\begin{proposition}\label{Prop:Main}
For any fixed $\mu>0$
there exists $(u, \lambda)\in H^1_\mu(\cG)\times \mathbb{R}^+$ which solves
\begin{equation}
\begin{cases}
-u'' + \lambda u = u^{p-1}, \quad u>0 & \text{in $\mathcal{\cG}$}, \\
\sum_{{\rm e} \succ {\rm v}} u'({\rm v}) = 0 &  \text{for any vertex ${\rm v}$}.
\end{cases}
\end{equation}
\end{proposition}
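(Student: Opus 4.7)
The plan is to take a sequence $\rho_n \to 1^-$ within the full-measure set of Proposition~\ref{Prop:NLSrho}, with corresponding solutions $(u_n, \lambda_n) \in H^1_\mu(\cG) \times \mathbb{R}^+$ of~\eqref{pb rho}, $u_n > 0$, $E_{\rho_n}(u_n,\cG) = c_{\rho_n}$, and constrained Morse index $m(u_n) \leq 1$, and to pass to the limit via Lemma~\ref{Lem:Abstract}. Because each $u_n$ solves the Euler--Lagrange equation exactly, $E'_{\rho_n}(u_n) + \lambda_n u_n = 0$, and the Hessian constraint~\eqref{Eq:NegativeSubspace} is inherited from $m(u_n) \leq 1$. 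Hence the hypotheses of Lemma~\ref{Lem:Abstract} left to verify reduce to: (i) $H^1$-boundedness of $(u_n)$ together with boundedness of $(\lambda_n)$; (ii) $\lim_n E_{\rho_n}(u_n,\cG) > 0$.

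Point (ii) is easy: since $\rho \mapsto E_\rho(u, \cG)$ is nonincreasing for every $u \in H^1(\cG)$, so is $\rho \mapsto c_\rho$, which yields $c_{\rho_n} \le c_{1/2}$; combined with $c_\rho \geq \kappa > 0$ from Lemma~\ref{MP-geometry} and extraction of a subsequence, $c_{\rho_n} \to c \in [\kappa, c_{1/2}]$. For point (i), pairing the equation for $u_n$ with $u_n$ and integrating gives $\|u_n'\|_{L^2(\cG)}^2 + \lambda_n \mu = \rho_n \|u_n\|_{L^p(\cG)}^p$; combined with $E_{\rho_n}(u_n, \cG) = c_{\rho_n}$ this rewrites as
\[
\|u_n'\|_{L^2(\cG)}^2 = \frac{2(p c_{\rho_n} + \lambda_n \mu)}{p-2}.
\]
Since $\|u_n\|_{L^2(\cG)}^2 = \mu$ is fixed, $H^1$-boundedness of $(u_n)$ is equivalent to an upper bound on $(\lambda_n)$.

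This upper bound is the main technical obstacle. The natural input is Corollary~\ref{Cor:BoundHamiltonian}: since $u_n \in H^1(\cG)$ and $u_n$ solves a second-order ODE on each edge, $u_n$ and $u_n'$ tend to zero at infinity on every unbounded edge, forcing $\ell_{\lambda_n,\rho_n}(u_n,\mathrm{e}) = 0$ there; hence the Corollary's sum $\sum_{\ell<0}|\mathrm{e}||\ell(u_n,\mathrm{e})|$ is supported on the bounded edges visited by $u_n$. Its right-hand side grows linearly in $\lambda_n$ with positive coefficient (since $p>6$), so to exclude $\lambda_n \to +\infty$ one needs an upper bound on the left-hand side of matching order. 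The mechanism I expect to invoke is a concentration-plus-Morse-index argument: in every region where $u_n^{p-2} \gtrsim \lambda_n$, a test function of width $\sim \lambda_n^{-1/2}$ produces a direction of strict negativity for $D^2 E_{\rho_n}(u_n)$ on $T_{u_n} S_\mu$ with a uniform gap, and $m(u_n) \leq 1$ then bounds the number of disjoint such regions by an absolute constant; each such bump carries $L^2$-mass of order $\lambda_n^{(6-p)/[2(p-2)]} \to 0$, which contradicts $\|u_n\|_{L^2(\cG)}^2 = \mu$.

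Granting the uniform bounds above, extract a further subsequence along which $\lambda_n \to \lambda_\infty \in [0, +\infty)$. Then Lemma~\ref{Lem:Abstract} applies to $(u_n, \lambda_n, \rho_n)$ with $\rho_\infty = 1$ and produces a strong $H^1$-limit $u_\infty > 0$ in $H^1_\mu(\cG)$ solving~\eqref{Eq:NLSinfty} with $\rho = 1$; Assumption~\ref{Assump:NoZeroPostiveZerostate} rules out $\lambda_\infty = 0$ (which would give a nontrivial positive $H^1$-solution at zero frequency), so $\lambda_\infty > 0$ and $(u_\infty, \lambda_\infty)$ is the desired pair.
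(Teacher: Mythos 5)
Your skeleton coincides with the paper's: pass to $\rho_n\to 1$ along the full-measure set of Proposition~\ref{Prop:NLSrho}, use the Nehari/Pohozaev identity to reduce $H^1$-boundedness of $(u_n)$ to an upper bound on $(\lambda_n)$, and feed everything into Lemma~\ref{Lem:Abstract}; your point (ii) and the identity $\|u_n'\|_{L^2(\cG)}^2=\tfrac{2(pc_{\rho_n}+\lambda_n\mu)}{p-2}$ are correct and match the paper. The problem is that the only genuinely hard step --- excluding $\lambda_n\to+\infty$ --- is precisely the one you leave as a conjectural ``mechanism I expect to invoke'', and that mechanism has two concrete gaps. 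First, the claim that every region where $u_n^{p-2}\gtrsim\lambda_n$ yields a direction of strict negativity of $D^2E_{\rho_n}(u_n)$ \emph{with a uniform gap} is unproved and is in fact borderline: for a bump $w$ supported in $\{u_n\ge\gamma_-\}$, negativity requires $\pi^2/L^2<(p-2)\lambda_n$ where $L$ is the width of that super-level set, and when $u_n$ oscillates near the constant equilibrium $\gamma_-\sim\lambda_n^{1/(p-2)}$ that width is exactly $\sim\pi/\sqrt{(p-2)\lambda_n}$ (half the period of the linearization at the center), so no gap comes for free; one would need a genuine Sturm--Liouville argument using $u_n'$ as a zero mode of the linearized operator. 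Second, and more seriously, even granting a bounded number of such regions, ``each bump carries vanishing mass'' does not contradict $\|u_n\|_{L^2(\cG)}^2=\mu$: you say nothing about the mass carried \emph{outside} the concentration regions --- the long stretches per period where $u_n$ is near the minimum $v_-$ of the orbit, the tails on unbounded edges, the edges with nonnegative Hamiltonian level --- and that is exactly where the difficulty lives.

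The paper closes this step by a purely ODE-based quantitative argument that never invokes the Morse index. On each edge ${\rm e}$ with $\ell_{\lambda_n,\rho_n}(u_n,{\rm e})<0$ it first shows $\ell_{\lambda_n,\rho_n}(u_n,{\rm e})=o(\lambda_n^{p/(p-2)})$, then uses the Appendix asymptotics \eqref{Eq:ToConcludePeriod}--\eqref{Eq:ToConcludeNorm} for the period and the $L^2$-norm over one period to obtain, for \emph{every} $C>0$ and $n$ large, $\|u_n\|_{L^2({\rm e})}\ge C\lambda_n^{-1}|{\rm e}|\,|\ell_{\lambda_n,\rho_n}(u_n,{\rm e})|$. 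Summing over such edges and invoking Corollary~\ref{Cor:BoundHamiltonian}, whose right-hand side grows like $\tfrac{3\lambda_n}{2}(\tfrac16-\tfrac1p)\mu$, gives $\mu\ge C\tfrac32(\tfrac16-\tfrac1p)\mu+o(1)$, which is absurd once $C$ is chosen large. To repair your proposal you must either carry out the bump-counting rigorously \emph{and} control the off-bump mass, or replace that paragraph by the paper's estimate; as written, the core of the proposition is not proved.
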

\begin{proof}
The thesis will follow as in Proposition~\ref{Prop:NLSrho} considering a sequence
$\rho_n\to 1$ such that the conclusion of
Theorem~\ref{Thm:mp_geom} holds
and the corresponding $(u_n)_{n\in\N}$ and $(\lambda_n)_{n\in\N}$ from Proposition~\ref{Prop:NLSrho}.
In order to apply Lemma~\ref{Lem:Abstract} and conclude, we have to prove that
$(u_n)_{n\in\N}$ and $(\lambda_n)_{n\in\N}$  are bounded in $H^1(\cG)$ and $\R$ respectively.

Note that by Pohozaev identity
\[
 E(u_n,\cG)= \left(\frac12-\frac1p\right)\int_\cG (u_n')^2 \,\mathrm{d}x-\frac{\lambda_n}{p}\int_\cG u_n^2 \,\mathrm{d}x
\]
since $\int_\cG u_n^2 \,\mathrm{d}x=\mu$, $(u_n)_{n\in\N}$
is bounded in $H^1(\cG)$
if and only if $(\lambda_n)_{n\in\N}$
is bounded in $\R$.

So let us assume that, up to a subsequence extraction, $\lambda_n\to +\infty$.
Recall
$$\ell_{\lambda, \rho}(u,{\rm e})=\frac12(u')^2+\frac\rho{p}u^p-\frac{\lambda}2u^2,$$
For each sequence of edges ${\rm e}_n$ such that $\ell_{\lambda_n, \rho_n}(u_n, {\rm e}_n)<0$,
first we prove that $\ell_{\lambda_n, \rho_n}(u_n, {\rm e}_n)=o(\lambda_n^{\frac{p}{p-2}})$. If it were not true then $\ell_{1,1}(\lambda_n^{-\frac1{p-2}}u_n, {\rm e}_n)\not\to 0$. Since $\ell_{1,\rho_n}(\lambda_n^{-\frac1{p-2}}u_n, {\rm e}_n)\in [\frac{\rho_n}p-\frac12,0]$, up to a subsequence extraction, $|u_n|$ stays away from $0$ and hence
$\inf_{{\rm e}_n} u_n \geq \kappa \rho_n^{-\frac1{p-2}}\lambda_n^{\frac1{p-2}}$, for some $\kappa>0$, and
$$\mu^2 \geq \kappa^2  \inf_{{\rm e}\in \mathcal{E}}|{\rm e}|\rho_n^{-\frac2{p-2}}\lambda_n^{\frac2{p-2}}$$
which leads to a contradiction in the limit $n\to\infty$. The same contradiction would hold if there is a
subsequence of $(u_n)_n$ made of constant functions necessarily equal to $\rho^{-\frac1{p-2}}\lambda_n^{\frac1{p-2}}$.

From \eqref{Eq:ToConcludePeriod} and \eqref{Eq:ToConcludeNorm} below (see Appendix), we deduce
\[
\|u_n\|_{L^2({\rm e}_n)}\sim_{n\to\infty}-|{\rm e}_n|\frac{\kappa_p \lambda_n^{\frac2{p-2}}}{2\ln(-\lambda_n^{-\frac{p}{p-2}}\ell_{\lambda_n, \rho_n}(u_n, {\rm e}_n))}
\]
Since for any $\kappa>0$ and $C>0$, if $X>0$ is sufficiently small then $0<-X\ln(X)<\kappa /C$, we deduce,
for any $C>0$ that there exists $N$ such that for $n\geq N$
\[
\|u_n\|_{L^2({\rm e}_n)}\geq  C \lambda_n^{-1}|{\rm e}_n||\ell_{\lambda_n, \rho_n}(u_n, {\rm e}_n)|
\]
Recall Corollary~\ref{Cor:BoundHamiltonian},
\[
 \sum_{{\rm e}\in {\mathcal E},\,\ell_{\lambda_n,\rho_n}(u_n,{\rm e}))<0}|{\rm e}||\ell_{\lambda_n,\rho_n}(u_n,{\rm e}))|
 \geq -\left(\frac12+\frac1p\right)E(u_n,\cG)+
\frac{3\lambda_n}2\left(\frac16-\frac1p\right)\mu
 \]
so that
\begin{align*}
\mu &\geq
C
\lambda_n^{-1}\left(
-\left(\frac12+\frac1p\right)E(u_n,\cG)+
\frac{3\lambda_n}2\left(\frac16-\frac1p\right)\mu\right)
\end{align*}
which leads to
a contradiction
in the limit $n\to\infty$
if $\mu>0$
since $C$ can be chosen such that $C\frac{3}2\left(\frac16-\frac1p\right)>1$.
This contradiction shows $(u_n)_{n\in\N}$ and $(\lambda_n)_{n\in\N}$  are bounded in $H^1(\cG)$ and $\R$ respectively.
Hence Lemma~\ref{Lem:Abstract} applies and we conclude as in Proposition~\ref{Prop:NLSrho}.
\end{proof}
\noindent Theorem~\ref{Thm:Main} follows from Proposition~\ref{Prop:Main} \hfill \qed

\newpage
\appendix

\begin{center}
\large{\bf{ Appendix}}
\end{center}

This appendix is devoted to some useful properties of the ODE problem considered in this work.
This is closely related to the phase space analysis done in~\cite[Section]{Esteban20}. The references listed therein presents other properties of solutions.

\section{Local positive solutions to stationary NLS}

Let us consider on $\R$, real solutions to the ODE:
\begin{equation}\label{Eq:ODE}\tag{ODE}
 -v''-\rho |v|^{p-2}v+m^2 v=0, \quad \rho\in (0,1], \,m \in \R.
\end{equation}
This equation is invariant by translation and symmetry with respect to any point. If $v$ is a solution so is $-v$. The equilibria correspond
to $v\equiv 0$ or constants $v$ such that $\rho|v|^{p-2}=m^2$. \emph{We are interested in $v>0$.}

Note that if $v$ is a solution of~\eqref{Eq:ODE} then $u$ such that $u(x)=\rho^{\frac1{p-2}}m^{-\frac2{p-2}}v(m^{-1}x)$ is a solution of
\[
  -u''-|u|^{p-2}u+u=0.
\]

Let us define the Hamiltonian
\[
 \ell_{m,\rho}(v):=\frac12|v'|^2+\frac{\rho}p|v|^p-\frac{\,\,\,m^2}2|v|^2.
\]
\paragraph{The Hamiltonian well.}
Let
\[
\pi_{m,\rho}(v):=\frac{\rho}p|v|^p-\frac{\,\,\,m^2}2|v|^2.
\]
This is an even function vanishing at $0$ (an equilibria of~\eqref{Eq:ODE}) and $\pm \gamma_+$ where
\[
 \gamma_+:= \left(\frac{p}{2\rho}\right)^{\frac1{p-2}}m^{\frac2{p-2}}.
\]
Its minima are at $\pm \gamma_-$ (two equilibrium of~\eqref{Eq:ODE}) where
$$\gamma_-:=\rho^{-\frac1{p-2}}m^{\frac2{p-2}},$$
with value $\beta:=\left(\frac1p-\frac12\right)\rho^{-\frac2{p-2}}m^{\frac{2p}{p-2}}$,
$0$ is a local maximum (value is $0$) and it tends to $+\infty$ at $\pm \infty$.
The level set of $\pi_{m,\rho}$ at $\mu$ real, $\pi_{m,\rho}^{-1}\{\mu\}$,
\begin{itemize}
 \item is empty if $\mu<\beta$,
 \item contains only $\{\pm\gamma_-\}$ if $\mu=\beta$,
 \item contains only two pairs of opposite reals if $\mu\in (\beta, 0)$,
 \item contains only $\pm \gamma_+$ and $0$ if $\mu=0$,
 \item and contains only a pair of opposite reals if $\mu>0$.
\end{itemize}
Then note that
$\pi_{m,\rho}(x)=\rho^{-\frac{2}{p-2}} m^{\frac{2p}{p-2}}\pi_{1,1}(\rho^{\frac{1}{p-2}}m^{\frac{-2}{p-2}}x)$ and we have
\begin{itemize}
 \item $\pi_{1,1}(y)=0$ if and only if $y=0$ or $y=\left(\frac{p}{2}\right)^{\frac1{p-2}}$;
 \item $\pi_{1,1}'(y)= y^{p-1}-y$ and $\pi_{1,1}'(y)=0$ if and only if $y=0$ or $y=1$.  Note that $\pi_{1,1}(1)=\left(\frac1p-\frac12\right)$;
\end{itemize}
Therefore
$\pi_{1,1}$ is invertible from
$[0,1]$
to
$[\left(\frac1p-\frac12\right), 0]$
with inverse denoted $g$.
Then $g$ is continuous and $g(0)=0$ and
for $z\in \left[\left(\frac1p-\frac12\right), 0\right]$, we have
\begin{equation}\label{Eq:Identity}
 \left(\frac1p g(z)^{p-2}-\frac12\right) g(z)^2=z
\end{equation}
and thus
\[
 g(z)=\sqrt{\frac{-z}{\frac12-\frac1p g(z)^{p-2}}}\sim_{z\to 0-}\sqrt{-2z}.
\]
Hence
for $u\in \left[\left(\frac1p-\frac12\right)\rho^{-\frac2{p-2}}m^{\frac{2p}{p-2}}, 0\right]$
\begin{align*}
\pi_{m,\rho}(x)=u \text{ and } \rho^{\frac1{p-2}}m^{-\frac2{p-2}}x\leq 1
&\Leftrightarrow \pi_{1,1}(\rho^{\frac1{p-2}}m^{-\frac2{p-2}}x)=\rho^{\frac2{p-2}}m^{-\frac{2p}{p-2}}u\\
&\qquad\text{ and } \rho^{\frac1{p-2}}m^{-\frac2{p-2}}x\leq 1\\
&\Leftrightarrow x=\rho^{-\frac1{p-2}}m^{\frac2{p-2}}g(\rho^{\frac2{p-2}}m^{-\frac{2p}{p-2}}u)
\end{align*}
and if $u=o(\rho^{\frac2{p-2}}m^{\frac{2p}{p-2}})$, as $m\to 0$ or $m\to +\infty$, then
$$x\sim \rho^{-\frac1{p-2}}m^{\frac2{p-2}}\sqrt{-2\rho^{\frac2{p-2}}m^{-\frac{2p}{p-2}}u}=m^{-1}\sqrt{-2u}=o(\rho^{\frac1{p-2}}m^{\frac{2}{p-2}}).$$

Rearranging~\eqref{Eq:Identity}, we have
\begin{align*}
\left(\left(\frac1p-\frac12\right)
+\frac1p \frac{g(z)^{p-2}-1-(p-2)(g(z)-1)}{(g(z)-1)^2}g(z)^2\right)(g(z)-1)^2
=&
z-\left(\frac1p-\frac12\right)
\end{align*}
which leads to
\[
g(z)-1\sim_{z\to\left(\frac1p-\frac12\right)-} \sqrt{\frac{2p}{(p-2)(p-4)}\left(z-\left(\frac1p-\frac12\right)\right)}
\]

\paragraph{Properties of solutions.}
Any weak continuous solution is of class $C^2$ and thus there are only strong solutions
and $\ell_{m,\rho}(v)$ is constant on intervals.
Note that $\ell_{m,\rho}(v)\geq \pi_{m,\rho}(v)$. As $\lim_{v\to \pm\infty} \pi_{m,\rho}(v)=+\infty$, all the solutions are bounded. Moreover, we have the following properties.
\begin{enumerate}
 \item If $m=0$ there is no solution with $\ell_{m,\rho}(v)\equiv 0$ except $v\equiv 0$.
 \item Since $|v'|^2=2(\ell_{m,\rho}(v)-\pi_{m,\rho}(v))$, $v'$ is bounded thus any maximal solution is global (defined on $\R$).
 \item Let $v$ be defined on a half line and tend to $k$ at the infinite end. Then $2|v'|^2=\ell_{m,\rho}(v)-\pi_{m,\rho}(v)$ tends to $k'=\ell_{m,\rho}(v)-\pi_{m,\rho}(k)$. Since $v'$ is continuous and
$v$ has a limit, $k'=0$. Then $v''$ tends to $m^2k-\rho|k|^{p-2}k$ and with the same reasoning $m^2k-\rho|k|^{p-2}k=0$ that is $k=0$
or $k=\pm \rho^{-\frac1{p-2}}m^{\frac2{p-2}}$. If $k=\pm \rho^{-\frac1{p-2}} m^{\frac2{p-2}}$, the minimum of $\ell_{m,\rho}$, then $v'\equiv 0$ and $v$ is constant.
  Note that if $v$ is maximal and $v'$ never vanishes, $v$ is monotonic and has limits at both infinities which are different equilibria (this is a heteroclinic orbit and $\ell_{m,\rho}(v)=0$) which is impossible. Indeed, one of this equilibrium is $\gamma_-$ or $-\gamma_-$ and therefore $\ell_{m,\rho}(v)=\pi_{m,\rho}(\pm\gamma_-)=\beta$ and $v$ is constant.
 \item 
 If $v$ is a maximal solution,
 all the critical values of $v$ are in the same level set of $\ell_{m,\rho}$ and are either made of
 two values
 if $\ell_{m,\rho}(v)\neq 0$.
 If $\ell_{m,\rho}(v)<0$, $v$ does not vanish. If $\ell_{m,\rho}(v)>0$, $v$ vanishes somewhere. 
 If $\ell_{m,\rho}(v)=0$, either it $v\equiv 0$ or $|v|$ has a maximum $\left(\frac{p}2\right)^{\frac1{p-2}}\rho^{-\frac1{p-2}}m^{\frac2{p-2}}$ and tends to $0$ at infinity. Indeed, if $0$ is a critical point then the solution is, by uniqueness, identically zero.
\item If $v$ is a maximal solution and has critical points
then $v$ is symmetric with respect to any of its critical point (from uniqueness of the Cauchy problem and the invariance by symmetry and translation of the equation). If $v$ has two distinct critical values then it is periodic and
\begin{itemize}
 \item $\ell_{m,\rho}(v)<0$, $v>0$ non constant, iff its maximum ${v_+}\in (\gamma_-,\gamma_+)$, then there is another critical value ${v_-}\in (0,\gamma_-)$, and ${v_-}$ is the minimum and $v$ is positive (and periodic). Recall $\ell_{m,\rho}(v)<0$ and $v<0$ iff $\ell_{m,\rho}(-v)<0$ and $-v>0$.
 \item $\ell_{m,\rho}(v)>0$ iff its maximum ${v_+}>\gamma_+$, then $-{v_+}$ is its minimum (and $v$ is periodic).
 \item $\ell_{m,\rho}(v)=0$, $v$ non constant iff its maximum is ${v_+}=\gamma_+$, then $v$ is positive and tends to $0$ at infinity. Note that $\ell_{m,\rho}(v)=0$, $v$ constant iff $v\equiv 0$.
 \item $\ell_{m,\rho}(v)=\beta$ iff its maximum ${v_+}=\gamma_-$ and then $v$ is constant.
\end{itemize}
\end{enumerate}

\paragraph{Period with negative Hamiltonian.}
We consider $v$ maximal with $\ell:=\ell_{m,\rho}(v)<0$.
Hence, $v$ has maximal value ${v_+}$ at $x_{v_+}$ and minimal value ${v_-}$ at $x_{{v_-}}$, the smallest minimum larger than $x_{v_+}$ so that
\[
\ell= \pi_{m,\rho}({v_+})
=
\pi_{m,\rho}({v_-})
\]
and if \fbox{$v$ is not constant} then
let us now consider the period $T_{m,\rho}(\ell)$. Recall that the solution is symmetric with respect to its critical points. Then, for $m=\rho=1$,
we have
\begin{align*}
\frac12T_{1,1}(\ell)&=
\int_{x_{v_+}}^{x_{{v_-}}}\,\mathrm{d}x=
 \int_{{v_-}}^{v_+}\frac{\,{\mathrm d}v}{\sqrt{2\ell-\frac2pv^p+v^2}}
 = 2\int_{{v_-}}^{1}\frac{\,{\mathrm d}v}{\sqrt{2\ell-\frac2pv^p+v^2}}\\
 &= \sqrt{2}\int_{{v_-}}^{1}\frac{\,{\mathrm d}v}{\sqrt{\ell-\pi_{1,1}(v)}}=\sqrt{2}\int_{\pi_{1,1}(1)}^{\pi_{1,1}({v_-})}\frac1{\sqrt{\ell-p}}
 \frac{\,{\mathrm d}p}{\pi_{1,1}'(g(p))}\\
 &=\sqrt{2}\int_{\frac1p-\frac12}^{\ell}\frac1{\sqrt{\ell-p}}
 \frac1{g(p)}\frac{\,{\mathrm d}p}{g(p)^{p-2}-1}\\
 &\sim_{\ell\to 0-}-\sqrt{2}\int_{\frac1p-\frac12}^{\ell}\frac1{\sqrt{\ell-p}}
 \frac1{\sqrt{-2p}}\,{\mathrm d}p=\int_1^{\frac{p-2}{2p(-\ell)}}\frac1{\sqrt{p-1}}
 \frac1{\sqrt{p}}\,{\mathrm d}p\\
 &\sim_{\ell\to 0-}-\ln(-\ell).
\end{align*}
We thus infer
\begin{equation}\label{Eq:ToConcludePeriod}
T_{m,\rho}(\ell)=m^{-1}T_1(\rho^{\frac1{p-2}}m^{-\frac{2p}{p-2}}\ell)\sim_{\ell\to 0-}-2m^{-1}\ln(-\rho^{\frac1{p-2}}m^{-\frac{2p}{p-2}}\ell).
\end{equation}
\paragraph{Norm with negative Hamiltonian.}
Now let us consider the square of the $L^2$ norms over a period $N_{m,\rho}(\ell)$. If $m=\rho=1$ then
 \begin{align*}
\frac12 N_{1,1}(\ell)&:=\int_{x_{v_+}}^{x_{{v_-}}}v(x)^2\,\mathrm{d}x=
\int_{{v_-}}^{v_+}\frac{v^2\,{\mathrm d}v}{\sqrt{2\ell-\frac2pv^p+v^2}}=
 2\int_{{v_-}}^{1}\frac{v^2\,{\mathrm d}v}{\sqrt{2\ell-\frac2pv^p+v^2}}\\
 &=\sqrt{2}\int_{\frac1p-\frac12}^{\ell}\frac1{\sqrt{\ell-p}}\frac{g(p)}{g(p)^{p-2}-1}\,{\mathrm d}p
\end{align*}
which is integrable and has a finite limit $\kappa_p$ as $\ell\to 0-$.
We thus infer
\begin{equation}\label{Eq:ToConcludeNorm}
N_{m,\rho}(\ell)=\rho^{-\frac2{p-2}}m^{\frac4{p-2}-1}N_1(\rho^{\frac1{p-2}}m^{-\frac{2p}{p-2}}\ell)\sim_{\ell\to 0-} 2 \kappa_p \rho^{-\frac2{p-2}} m^{\frac4{p-2}-1}.
\end{equation}

\newpage

\bibliographystyle{plain}
\bibliography{biblio}

\begin{thebibliography}{10}

\bibitem{AST_JFA2016}
R.~Adami, E.~Serra, and P.~Tilli.
\newblock {Threshold phenomena and existence results for {NLS} ground states on
  metric graphs}.
\newblock {\em J. Funct. Anal.}, 271(1):201–223, 2016.

\bibitem{BeRu}
J.~Bellazzini and D.~Ruiz.
\newblock {Finite energy traveling waves for the {Gross}-{Pitaevskii} equation
  in the subsonic regime}.
\newblock {\em Am. J. Math.}, 145(1):109–149, 2023.

\bibitem{BerkolaikoKuchment}
G.~Berkolaiko and P.~Kuchment.
\newblock {\em {Introduction to quantum graphs}}, volume 186 of {\em
  {Mathematical Surveys and Monographs}}.
\newblock American Mathematical Society, Providence, RI, 2013.

\bibitem{BK}
G.~Berkolaiko and P.~Kuchment.
\newblock {\em {Introduction to quantum graphs}}, volume 186 of {\em {Math.
  Surv. Monogr.}}
\newblock Providence, RI: American Mathematical Society (AMS), 2013.

\bibitem{BoniDovettaSerra}
F.~Boni, S.~Dovetta, and E.~Serra.
\newblock {Normalized ground states for {S}chrödinger equations on metric
  graphs with nonlinear point defects}.
\newblock {\em J. Funct. Anal.}, 288(4):Paper No. 110760, 40, 2025.

\bibitem{BorthwickChangJeanjeanSoaveApp}
J.~Borthwick, X.~Chang, L.~Jeanjean, and N.~Soave.
\newblock {Normalized solutions of {$L^2$}-supercritical {NLS} equations on
  noncompact metric graphs with localized nonlinearities}.
\newblock {\em Nonlinearity}, 36(7):3776–3795, 2023.

\bibitem{BCJS-2022}
J.~Borthwick, X.~Chang, L.~Jeanjean, and N.~Soave.
\newblock {Bounded {Palais}-{Smale} sequences with {Morse} type information for
  some constrained functionals}.
\newblock {\em Trans. Am. Math. Soc.}, 377(6):4481–4517, 2024.

\bibitem{MR4755505}
X.~Chang, L.~Jeanjean, and N.~Soave.
\newblock {Normalized solutions of {$L^2$}-supercritical {NLS} equations on
  compact metric graphs}.
\newblock {\em Ann. Inst. H. Poincaré{} C Anal. Non Linéaire},
  41(4):933–959, 2024.

\bibitem{dovetta2024normalized}
S.~Dovetta, L.~Jeanjean, and E.~Serra.
\newblock {Normalized solutions of $L^2$-supercritical NLS equations on
  noncompact metric graphs}.
\newblock {\em arXiv preprint arXiv:2404.15841}, 2024.

\bibitem{Esteban20}
M.~J. Esteban.
\newblock {Gagliardo-{N}irenberg-{S}obolev inequalities on planar graphs}.
\newblock {\em Commun. Pure Appl. Anal.}, 21(6):2101–2114, 2022.

\bibitem{FG-1992}
G.~Fang and N.~Ghoussoub.
\newblock {Second order information of {Palais}-{Smale} sequences in the
  mountain pass theorem}.
\newblock {\em Manuscr. Math.}, 75(1):81–95, 1992.

\bibitem{FG-1994}
G.~Fang and N.~Ghoussoub.
\newblock {Morse-type information on {Palais}-{Smale} sequences obtained by
  min-max principles}.
\newblock {\em Commun. Pure Appl. Math.}, 47(12):1595–1653, 1994.

\bibitem{J-PRSE1999}
L.~Jeanjean.
\newblock {On the existence of bounded {Palais}-{Smale} sequences and
  application to a {Landesman}-{Lazer}-type problem set on
  {{\(\mathbb{R}^N\)}}}.
\newblock {\em Proc. R. Soc. Edinb., Sect. A, Math.}, 129(4):787–809, 1999.

\bibitem{KostenkoNicolussi}
A.~Kostenko and N.~Nicolussi.
\newblock {Spectral estimates for infinite quantum graphs}.
\newblock {\em Calc. Var. Partial Differential Equations}, 58(1):Paper No. 15,
  40, 2019.

\bibitem{KostenkoNicolussiBook}
A.~Kostenko and N.~Nicolussi.
\newblock {\em {Laplacians on infinite graphs}}, volume~3 of {\em {Memoirs of
  the European Mathematical Society}}.
\newblock EMS Press, Berlin, 2022 ©2023.

\bibitem{LoMaRu}
R.~López-Soriano, A.~Malchiodi, and D.~Ruiz.
\newblock {Conformal metrics with prescribed {Gaussian} and geodesic
  curvatures}.
\newblock {\em Ann. Sci. Éc. Norm. Supér. (4)}, 55(5):1289–1328, 2022.

\bibitem{ReedSimon}
M.~Reed and B.~Simon.
\newblock {\em {Methods of modern mathematical physics. {I}}}.
\newblock Academic Press Inc. [Harcourt Brace Jovanovich Publishers], New York,
  second edition, 1980.
\newblock Functional analysis.

\end{thebibliography}

\end{document}